\documentclass[a4paper,11pt,oneside]{amsart}
\usepackage{geometry}

\usepackage[english]{babel}
\usepackage{amsmath}
\usepackage[all]{xy}
\usepackage{amsthm}
\usepackage{amssymb}
\usepackage{mathtools}
\usepackage{xcolor}
\usepackage{enumitem}
\usepackage{xpatch}
\usepackage{multicol}  
\setlength{\multicolsep}{3.0pt plus 1.0pt minus 0.75pt}

\usepackage{setspace}
\onehalfspacing
\setdisplayskipstretch{}

\makeatletter
 \def\@textbottom{\vskip \z@ \@plus 14pt}
 \let\@texttop\relax
\makeatother

\definecolor{linkblue}{RGB}{1,1,190}
\definecolor{citered}{RGB}{190,1,1}

\usepackage[linkcolor=linkblue
           ,urlcolor=linkblue
           ,citecolor=citered
           ,ocgcolorlinks        % This ensures that colored links are
                                 % printed black/white, while appearing
                                 % colored on screen.
                                 % WARNING: Does not work with line breaks
                                 % in links.
           ,bookmarksopen=True,  % Automatically expand bookmarks in PDF
           ]{hyperref}
\usepackage[ocgcolorlinks]{ocgx2}
% Set PDF title based on document data
\makeatletter
  \AtBeginDocument{
    \hypersetup{
      pdftitle  = {\@title},
    }
  }
\makeatother
  
\usepackage[capitalize]{cleveref}

\usepackage{microtype}

\newtheorem{teor}{Theorem}[section]
\newtheorem{defi}[teor]{Definition}
\newtheorem{defis}[teor]{Definitions}
\newtheorem{prop-defi}[teor]{Proposition-Definition}
\newtheorem{lemma}[teor]{Lemma}
\newtheorem{prop}[teor]{Proposition}
\newtheorem{cor}[teor]{Corollary}

\theoremstyle{remark}
\newtheorem{remark}[teor]{Remark}

\newtheorem{example}[teor]{Example}

\DeclareMathOperator{\add}{add}

\DeclareMathOperator{\Hom}{Hom}
\DeclareMathOperator{\im}{im}
\DeclareMathOperator{\End}{End}

\DeclarePairedDelimiter{\card}{\lvert}{\rvert}

\setlist[enumerate,1]{label=\textup{(\arabic*)}, ref=\textup{(}\arabic*\textup{)},
  itemsep=0.5em plus 0.15em minus 0.05em,
  topsep=0.5em plus 0.15em minus 0.05em,
  leftmargin=0.75cm}
\setlist[enumerate,2]{label=\textup{(\roman*)}, ref=\textup{(}\roman*\textup{)}
  itemsep=0.5em plus 0.15em minus 0.05em,
  topsep=0.5em plus 0.15em minus 0.05em}
\setlist[itemize, 1]{itemsep=0.5em plus 0.15em minus 0.05em,
  topsep=0.5em plus 0.15em minus 0.05em, leftmargin=0.75cm}

% % Special enumeration for equivalences
\newlist{equivenumerate}{enumerate}{1}
\setlist[equivenumerate,1]{%
  label=\textup{(\alph*)},
  ref=\textup{(}\alph*\textup{)},
  itemsep=0.5em plus 0.15em minus 0.05em,
  topsep=0.5em plus 0.15em minus 0.05em,
  leftmargin=0.75cm
}

\newlist{proofenumerate}{enumerate}{1}
\setlist[proofenumerate,1]{%
  itemsep=0.5em plus 0.15em minus 0.05em,
  topsep=0.5em plus 0.15em minus 0.05em,
  wide, labelindent=0pt
}

\xpatchcmd{\paragraph}{\normalfont}{{\normalfont\bfseries}}{}{}

\newcommand{\defit}[1]{\textsf{#1}}

\newcommand{\V} {\rm {V}}
\newcommand{\CFM}  { \rm CFM } 
\newcommand{\RCFM}  { \rm RCFM  }

\DeclareMathOperator{\Tr}{Tr}

\DeclareMathOperator{\Vmon}{V}
\newcommand{\st}{\operatorname{st}}
\newcommand{\LL}{\operatorname{\mathcal L}}

\title[realization of monoids with countable sum]{realization of monoids with countable sum}

\author{Zahra Nazemian}
\address{University of Graz\\
         Department of Mathematics and Scientific Computing\\
         NAWI Graz\\
          Heinrichstra\ss e 36\\
          8010 Graz, Austria}
\email{zahra.nazemian@uni-graz.at}

\subjclass[2020]{Primary 16D70; Secondary 16D40, 16E50, 16P10, 16S99}

\keywords{$\aleph_0$-monoids, direct-sum decompositions,   Von Neumann regular rings.}

\begin{document}

\begin{abstract}
  
For every infinite cardinal number $\kappa$, $\kappa$-monoids and their realization have recently been introduced and studied in \cite{me}. 
As stated in \cite[Corollary 4.7]{me}, a $\kappa$-monoid $H$ has a realization to a ring $R$ 
if there exists an element $x \in H$ such that $H$ is $\aleph_1 ^{-}$-braided over $\text{add}(\aleph_0 x)$, and $\text{add}(\aleph_0 x)$,  as $\aleph_0$-monoid, 
 has a realization to $R$. Furthermore, $H$ has a realization to hereditary rings if there exists an element $x \in H$ such that $H$ is 
 braided over $\text{add}(x)$. These prompt an investigation into when $\aleph_0$-monoids have realizations. 
In this paper, we discuss the realization of $\aleph_0$-monoids and provide a complete characterization for the realization of two-generated ones in hereditary Von Neumann regular rings.
\end{abstract}

\date{}

\maketitle

\section{Introduction}

If $R$ is a  ring, then $V (R)$, the class isomorphism of finitely genereted projective right modules,  
 is a reduced commutative monoid with order-unit. In their seminal papers Bergman \cite {Bergman} and 
Bergman and Dicks \cite {BergWar} show that: for every reduced commutative monoid with order-unit and every
 field $k$, there exists a hereditary $k$-algebra $R$ with  $V(R) \cong H.$
 In the past two decades, the investigation of projective modules that are not necessarily finitely generated has been a focal point among researchers, as evidenced by works such as those by Herbera, P\v{r}ihoda, and Wiegand, R. Antonie, et al. \cite{HerberaPrihodaWiegand23, PavelHerbera, ARAOTHERS}. 
For every cardinal number $\kappa$, $\kappa$-monoids, a class of commutative monoids where the sum of $\kappa$ many elements is allowed, have been recently introduced and studied 
in   \cite{me}.
When $R$ is a ring, isomorphism classes of projective (right) modules generated by at most $\kappa$ many elements, 
denoted by $V ^{\kappa} (R)$,  
is an example of a $\kappa$-monoid. 
A $\kappa$-monoid $H$ is said to have \defit{realization} if it is isomorphic as a $\kappa$-monoid to $V^{\kappa}(R)$, for some ring $R$, see \cite[Section 2]{me}. 
Although $\kappa$-monoids are reduced monoids,  they do not generally have realizations.

Besides the study of properties of $\kappa$-monoids, the realization problem for them has been intensively studied in \cite{me}. 
As it is recalled in the following, it turned out that the focus should be on the study of the realization problem for $\aleph_0$-monoids. 
Let us recall some definitions before delving deep into the problem, and prior to that, we adopt the conventions from \cite{me}. Namely,
 we use infinite ordinals and cardinals, but to avoid set-theoretical difficulties, we always consider such cardinals up to a fixed bound $\kappa$.
As underlying axiom system we have in mind the usual ZFC (with informal classes).
As usual in this context, we use the von Neumann definition of ordinals, and a cardinal is the smallest ordinal of a given cardinality (but cardinals are by default just considered as sets, forgetting the well-order).
We make liberal use of the axiom of choice, to ensure that the basic properties of cardinal arithmetic are well-behaved.
See \cite[Chapter 5]{roitman90} for background.
%The cardinality of a set $X$ is denoted by $\card{X}$.

If $x \coloneqq (x_i)_{i \in \kappa} \in H^{\kappa}$ is a family indexed by elements of $\kappa$ over some set $H$, and $\Sigma \colon H^{\kappa} \to H$ is a map, we use the notation
\[
  \sum_{i \in \kappa} x_i \coloneqq \Sigma\big( (x_i)_{i \in \kappa}\big),
\]
mimicking the typical summation notation. 
%Further, $\supp(x) \coloneqq \{\, i \in \kappa : x_i \ne 0 \,\}$ denotes the \defit{support} of the family.

\begin{defi} \label{def:kappa-monoid}
  Let $\kappa$ be an infinite cardinal.
  A $\kappa$-monoid is a set $H$ together with an element $0 \in H$ and a map $\Sigma \colon H^{\kappa} \to H$ such that the following conditions are satisfied.
  \begin{enumerate}[label=\textup{(A\arabic*)},leftmargin=1.25cm]
  \item \label{a:trivial} If $x=(x_i)_{i \in \kappa} \in H^\kappa$ with $x_i=0$ for all $i \ne 0$, then $\sum\limits_{i \in \kappa} x_i=x_0$.
  \item \label{a:flatten}
    If $(x_{i,j})_{i,j \in \kappa} \in H^{\kappa \times \kappa}$ and $\pi \colon \kappa \times \kappa \to \kappa$ is a bijection, then
    \[
      {\sum_{i \in \kappa}} {\sum_{j \in \kappa}} x_{i,j} = {\sum_{k \in \kappa}} x_{\pi^{-1}(k)}.
    \]
  \end{enumerate}
\end{defi}

%As mentioned earlier, $\V ^{\kappa}(R)$ is a particular example of a $\kappa$-monoid. For a proof and more examples, we refer the reader to \cite{me}. 
Naturally, as it is explained in details in \cite[Section 2]{me},  when  $H$ is a  $\kappa$-monoid, we can define a binary operation on $H$  which makes $H$ a commutative  (reduced) monoid. 
Indeed,  sum of two elements  $a, b\in H$ is defined to be 
 $ \sum\limits_{i \in \kappa} x_i$, where $x_0 = a, x_1 = b$ and all other $x_i = 0$. 
 % However, there are $\kappa$-monoids without realizations (see \cite[Section 2]{me}). 
 Similarly,
when $I$ is a subset of $ \kappa$ and $(y_i)_{i \in I}$ is a family of elements in $H$, we can define 
$ \sum\limits _{i \in I} y_i :=  \sum \limits_{i  \in \kappa} x_i  $, where $x_i = y_i$ for every $i \in I$ and $y _i = 0$ other places. Using this, 
   one can see that  when 
  $ \alpha \leq \kappa$ are infinite  cardinal  numbers, then a  $\kappa$-monoid is an $\alpha$-monoid. In particular, every 
  $\kappa$-monoid is $\aleph_0$-monoid, where $\aleph_0:= \{0, 1, \cdots \}$. %We say $H$ is a monoid with countable sum, we mean we are dealing with a $\aleph_0$-monoid. 
  Moreover, 
   by  $\alpha a$, where $a \in H$ and $\alpha \leq \kappa$ is any cardinal number, we mean sum of $\alpha$-many copies of $x$, that is $ \sum \limits_{i \in \alpha} a  $. 
Given a monoid $H$ and  element $x \in H$, we write
\[
  \add(x) \coloneqq \{\, y \in H : \text{there exist } z \in H,\, n  \geq 1 \text{ with } y+z=nx \,\}
\]
for the set of all summands of multiples of $x$. 
An element  $u \in H$ is called an \defit{order-unit} if  $H = \add(u)$. 
It is not difficult to see that $\kappa$-monoids are  reduced (that is, any equation of the form $a+b=0$ implies $a=b=0$), see \cite[Lemma 2.8]{me}.

Let $\mathcal{P}_{\text{fin}}$ be a class of finitely generated projective  (right) modules over some (unital, associative) ring $R$. 
% and assume that $\cC$ is closed under finite direct sums and under isomorphisms (typically, one also assumes that $\cC$ is closed under direct summands).
%To avoid set-theoretic issues, we require that the isomorphism classes of $\cC$ form a set.
Then the set of isomorphism classes together with the operation induced by the direct sum form a monoid, denoted by $\V(R)$. 
Every  element in 
 $\V(R)$ is in form $[P]$, the class isomorphism of a  finitely generated projective module $P$. 
Obviously,  $\V(R)$  is a commutative reduced monoid, where
$[R]$ is an  order-unit. A ring $R$ is called right (resp., left) \defit{hereditary} if every right (resp., left) ideal is a projective $R$-module. 
A ring which is both right and left hereditary is called  a hereditary ring.
By Bergman and Dicks' result for every reduced commutative monoid $H$  with order-unit and every
 field $k$, there exists a hereditary $k$-algebra $R$ with  $V(R) \cong H.$
 Similarly, 
 $\Vmon^{\kappa}(R)$ is the $\kappa$-monoid of isomorphism classes of projective (right) modules generated by at most $\kappa$-many elements. 
 Although $\kappa$-monoids are reduced, they may not have a  realization even if they have an order-unit (see \cite[Section 2]{me}). 
 For a $\kappa$-monoid $H$ and $x \in H$, $\operatorname{add}(\aleph_0 x)$ is an $\aleph_0$-monoid, which plays an important role in the realization problem:

\begin{teor} {\rm (\cite [Corollary 4.7]{me})}\label{amix}
 
 A $\kappa$-monoid $H$ has realization  (that it is isomorphic to $\Vmon^{\kappa}(R)$ for a ring $R$)  iff there exits $x \in H$ such that:
 \begin{enumerate}
 \item  $\add(\aleph_0 x)$ has a realization as $\aleph_0$-monoid. 
 \item \begin{enumerate}

 \item   For every $h \in H$, there exist a family of elements $ (x_i) _{i \in \kappa}$ in $\add(\aleph_0 x)$  such that $h =  \sum\limits_{i  \in \kappa} x_i $.
 \item When ever  $\sum\limits_{i  \in \kappa} x_i  =  \sum\limits_{i  \in \kappa} y_i $, there exist 
  indexed partitions $(I_{\mu})_{\mu \in \kappa}$ and $(J_\mu)_{\mu \in \kappa}$ of $\kappa$ with $\card{I_\mu}$,~$\card{J_\mu} \leq  \aleph_0 $, and families $(u_\mu)_{\mu \in \kappa}$ and $(v_\mu)_{\mu \in \kappa}$ in $\add(\aleph_0 x)$ such that $v_\mu=0$ whenever $\mu$ is a limit element, and 
    \[
      \sum\limits_{i \in I_\mu} x_i = v_\mu + u_\mu,
      \qquad\text{and}\qquad
      \sum_{j \in J_\mu} y_j = v_{\mu+1} + u_{\mu} \qquad \text{for all } \mu \in \kappa.
    \]
 
 \end{enumerate}
 \end{enumerate}
 
\end{teor}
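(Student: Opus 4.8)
I would prove the two implications separately; all the genuine content sits in a transfinite exchange lemma for direct sums of countably generated projective modules, while everything else is manipulation of $\kappa$-monoid sums together with Kaplansky's theorem. For the forward implication, assuming $H\cong\Vmon^{\kappa}(R)$, take $x=[R]$. Since $n\cdot\aleph_0=\aleph_0$ one has $n(\aleph_0 x)=\aleph_0 x=[R^{(\aleph_0)}]$ in $\Vmon^{\kappa}(R)$, so $\add(\aleph_0 x)$ is the set of isomorphism classes of direct summands of $R^{(\aleph_0)}$, i.e.\ exactly $\Vmon^{\aleph_0}(R)$, giving (1). For (2a), if $h=[P]$ with $P$ generated by at most $\kappa$ elements, Kaplansky's theorem gives $P\cong\bigoplus_{i\in\kappa}P_i$ with each $P_i$ countably generated, hence $h=\sum_{i\in\kappa}[P_i]$ with every $[P_i]\in\add(\aleph_0 x)$. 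For (2b), an equality $\sum_i[P_i]=\sum_j[Q_j]$ with all terms in $\add(\aleph_0 x)$ is precisely a module isomorphism $\bigoplus_i P_i\cong\bigoplus_j Q_j$, and the required partitions and families $(u_\mu),(v_\mu)$ are exactly the output of the exchange lemma stated at the end.

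\textbf{Converse.} Given $x$ satisfying (1) and (2), fix a ring $R$ with an $\aleph_0$-monoid isomorphism $\add(\aleph_0 x)\cong\Vmon^{\aleph_0}(R)$, written $y\mapsto[P_y]$ with each $P_y$ countably generated projective. Using (2a), choose for each $h\in H$ a decomposition $h=\sum_{i\in\kappa}x_i$ with $x_i\in\add(\aleph_0 x)$ and set $\Phi(h):=\bigl[\bigoplus_{i\in\kappa}P_{x_i}\bigr]\in\Vmon^{\kappa}(R)$. Well-definedness is where (2b) is used: for two decompositions $\sum_i x_i=\sum_j x'_j=h$, condition (2b) yields partitions $(I_\mu),(J_\mu)$ of $\kappa$ into countable blocks and families $(u_\mu),(v_\mu)$ in $\add(\aleph_0 x)$ with $v_\mu=0$ whenever $\mu$ is a limit ordinal (in particular $v_0=0$) and $\sum_{i\in I_\mu}x_i=v_\mu+u_\mu$, $\sum_{j\in J_\mu}x'_j=v_{\mu+1}+u_\mu$; transporting to modules and regrouping the direct sum along these blocks,
\[
  \bigoplus_{i\in\kappa}P_{x_i}\;\cong\;\Bigl(\bigoplus_{\mu\in\kappa}P_{v_\mu}\Bigr)\oplus\Bigl(\bigoplus_{\mu\in\kappa}P_{u_\mu}\Bigr)\;\cong\;\Bigl(\bigoplus_{\mu\in\kappa}P_{v_{\mu+1}}\Bigr)\oplus\Bigl(\bigoplus_{\mu\in\kappa}P_{u_\mu}\Bigr)\;\cong\;\bigoplus_{j\in\kappa}P_{x'_j},
\]
the middle isomorphism holding because $\mu\mapsto\mu+1$ is a bijection of $\kappa$ onto the successor ordinals below $\kappa$ and the deleted limit-indexed summands $P_{v_\mu}$ are $0$. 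That $\Phi$ is a $\kappa$-monoid homomorphism then follows from the flattening axiom (A2) and associativity of arbitrary direct sums; surjectivity is once more Kaplansky's theorem (a $\kappa$-generated projective is a $\kappa$-fold direct sum of countably generated projectives); and injectivity reverses the well-definedness computation: a module isomorphism $\bigoplus_i P_{x_i}\cong\bigoplus_j P_{x'_j}$ becomes, via the exchange lemma, a braiding of $(x_i)$ and $(x'_j)$ inside $\add(\aleph_0 x)$, and then the identity $\sum_\mu v_\mu=\sum_\mu v_{\mu+1}$ read inside $H$ gives $\sum_i x_i=\sum_j x'_j$.

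\textbf{The main obstacle} is the exchange lemma itself: for countably generated projective right $R$-modules $(P_i)_{i\in\kappa}$, $(Q_j)_{j\in\kappa}$ with $\bigoplus_i P_i\cong\bigoplus_j Q_j$, one needs partitions $(I_\mu),(J_\mu)$ of $\kappa$ into countable sets and countably generated projectives $U_\mu,V_\mu$ with $V_\mu=0$ at limits such that $\bigoplus_{i\in I_\mu}P_i\cong V_\mu\oplus U_\mu$ and $\bigoplus_{j\in J_\mu}Q_j\cong V_{\mu+1}\oplus U_\mu$. I would build these by transfinite recursion: at successor stages, alternately absorb a countable block of the $P_i$'s and then of the $Q_j$'s against the portion already matched, and carry forward the unmatched ``overhang'' $V_{\mu+1}$, which is a direct summand of the countably generated module assembled so far and hence again countably generated projective. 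The delicate points are a fair bookkeeping forcing $(I_\mu),(J_\mu)$ to exhaust $\kappa$, and the verification that at each limit stage the partial matching is already exact, so that taking $V_\mu=0$ there is legitimate; the latter works precisely because only countably many indices are consumed between consecutive limit stages, which keeps every overhang countably generated. Throughout one uses Kaplansky's theorem and the closure of the class of countably generated projective modules under direct summands and under countable direct sums.
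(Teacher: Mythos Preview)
The paper does not contain its own proof of this statement: Theorem~\ref{amix} is simply quoted from \cite[Corollary~4.7]{me} as background, with no argument given here. So there is nothing in the present paper to compare your proposal against line by line.

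That said, your outline is entirely in the spirit of the techniques the paper does develop. The proof of Theorem~\ref{directsumofinfinite} is exactly the $\aleph_0$-case of your ``exchange lemma'': given an isomorphism $\bigoplus_i A_i \cong \bigoplus_j B_j$ with finitely generated summands, one alternately pushes finite blocks through $f$ and $g=f^{-1}$, peeling off complements $T^x_k$, $T^y_k$ at each step; your $U_\mu$, $V_\mu$ are precisely these overhangs. Your reduction of the converse to well-definedness of $\Phi$, and the computation
\[
\sum_\mu v_\mu + \sum_\mu u_\mu \;=\; \sum_\mu v_{\mu+1} + \sum_\mu u_\mu
\]
via the bijection of $\kappa$ with its successor ordinals (the limit-indexed $v_\mu$ being zero), is the standard way to read the braiding identities back as an equality in $H$, and matches how the $\aleph_1^{-}$-braiding is used in \cite{me}.

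One point in your sketch deserves sharpening. You write that at limit stages ``the partial matching is already exact'' and justify this by saying only countably many indices are consumed between consecutive limits. Countability of the overhang is not the issue; what actually forces $V_\mu=0$ at a limit $\mu$ is that, within each $\omega$-block preceding $\mu$, the back-and-forth must be arranged so that the fixed isomorphism $f$ restricts to an isomorphism between the partial sums $\bigoplus_{i\in I_{<\mu}}P_i$ and $\bigoplus_{j\in J_{<\mu}}Q_j$. Concretely, at successor step $\mu=\lambda+n$ you must not only absorb the next scheduled index but also close up under $f$ and $f^{-1}$ so that the union over $n<\omega$ is $f$-invariant. This is the same dovetailing visible in the proof of Theorem~\ref{directsumofinfinite}, iterated transfinitely; without it there is no reason the overhang vanishes at $\omega$, and the reset $V_\omega=0$ would be illegitimate. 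Once this is made explicit, your argument goes through.
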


Therefore, the realization problem for $\kappa$-monoids can be restricted to $\aleph_0$-monoids. In this manuscript, we discuss the realization of $\aleph_0$-monoids, or, 
in other words, monoids with countable sums. The structure of the paper is as follows: Section 1 delves into the realization problem concerning monoids featuring countable sums.
We show that if an $\aleph_0$-monoid has a realization, the $x$ given in Theorem \ref{amix} can be better understood, and we refer to such $x$ as a braiding element.
We observe that for an $\aleph_0$-monoid having a realization to a  hereditary ring is equivalent to having a realization to a ring whose projective 
right modules are direct sums of finitely generated ones (we say such a ring is PDFG).\\
In Section 3, we focus on two-generated  $\aleph_0$-monoids. 
Recall that a set $X$ of elements of an  $\aleph_0$ monoid $H$ is called a {\defit {generating set}} if the smallest $\aleph_0$-submonoid 
of $H$ containing $X$ is equal to $H$. In this 
case we write 
$H =   \langle X   \rangle _{\aleph _0}$. For example,  Condition (2)(i) in Theorem 1.2  asserts that $H = \langle \add(\aleph_0 x) \rangle_{\aleph_0}$.
 We call an $\aleph_0$-monoid $H$ \defit{cyclic} if there exists an element $x \in H$ such that $H = \langle x \rangle_{\aleph_0}$, 
meaning every element in $H$ is of the form $\alpha x$, where $0 \leq \alpha \leq \aleph_0$. As we see in Lemma \ref{again}, a cyclic  $\aleph_0$-monoid $H$  has 
realization iff 
$\aleph_0 x \neq nx$ for every $n < \aleph_0$ iff
 $H$ has realization to a hereditary ring. 
A  $\kappa$-monoid $H$ is called \defit{two-generated} if it has a generating set with two elements, say $H =  \langle \{x_1, x_2\}   \rangle _{\aleph _0} $. 
That means 
every $y \in H$ can be represented as $y = \sum\limits_{k \in \aleph_0} y_k$ with $y_k$ equal to $x_1$,  $x_2$ or $0$. Since the order of the elements in the
 family $(y_k)_{k \in \aleph_0}$ does not matter, it is convenient to represent such a family as $\alpha X_1 + \beta X_2$, where $0 \leq \alpha \leq \aleph_0$ is the number of elements in the family that are equal to $x_1$, and $0 \leq \beta \leq \aleph_0$ is the number of elements that are equal to $x_2$. We call such a representation $\alpha X_1 + \beta X_2$ a \defit{form}
  for $y$ (based on generating set $\{x_1, x_2\}$). Clearly the equality $y = \alpha x_1 + \beta x_2$ also holds. 
  We say that $\alpha X_1 + \beta X_2$ is an \defit{infinite form} if at least one of $\alpha$, $\beta$ is infinite, and a \defit{finite form} otherwise. Depending on $H$, an element may have both infinite and finite forms. In \cite[Theorem 5.3]{me}, a characterization is provided for when a two-generated $\aleph_0$-monoid has realization to hereditary rings. Namely,

\begin{teor} {\rm \cite  [Theorem 5.3] {me}}\label{hereditarycasecor}
  A non-cyclic $\aleph_0$-monoid $H$ generated by two elements $x_1$,~$x_2$ is isomorphic to $V^{\aleph_0}(R)$ for a
   hereditary ring $R$, if and only if the following conditions hold for $1 \leq i \neq j \leq 2$.
  \begin{enumerate}[label=\textup(\roman*\textup),leftmargin=2.5em] 
  \item \label{hcc:threeinf} \label{hcc:first} $n x_i + \aleph_0 x_j = \aleph_0 x_i + \aleph_0 x_j$ with $n$ finite implies $\aleph_0 x_j = \aleph_0 x_i +\aleph_0 x_j$ and $x_i \in \add(x_j)$.
  \item \label{hcc:twoinf} If $x_i \not \in \add(x_j)$ and $m x_i + \aleph_0 x_j = n x_i + \aleph_0 x_j$ with $m$,~$n$ finite, then there exist finite $k$,~$k'$ such that $m x_i + k x_j = n x_i + k' x_j$.
  \item \label{hcc:finite-infinite} \label{hcc:last} An element of $H$ cannot have both finite and infinite forms.
  \end{enumerate}
\end{teor}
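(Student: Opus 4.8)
The statement is an equivalence, and the plan is to treat the two directions by rather different means. The external inputs we will use are: the realization criterion for hereditary rings (the hereditary counterpart of Theorem~\ref{amix}), by which $H$ has a realization to a hereditary ring as soon as $H$ is braided over $\add(x)$ for some $x \in H$; the Bergman--Dicks theorem \cite{Bergman,BergWar}, by which every finitely generated reduced commutative monoid with order-unit is $V(R_0)$ for a hereditary $k$-algebra $R_0$; and the equivalence, recorded above, between realization to a hereditary ring and realization to a PDFG ring. Throughout, write $F \coloneqq \add(x_1 + x_2)$, which certainly contains every element of the form $m x_1 + n x_2$ with $m$,~$n$ finite (such an element is a summand of $\max(m,n)(x_1+x_2)$).

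For sufficiency, assume (i)--(iii). One first checks, with the help of (iii), that $F$ consists \emph{exactly} of the finite forms: if $y \in \add(x_1+x_2)$ had an infinite form, then a complement $z$ with $y + z = n(x_1+x_2)$ would exhibit a finite form of the element $y+z$, which, inheriting an infinite form from $y$, would contradict (iii). Hence $F$ is a finitely generated reduced monoid with order-unit $x_1 + x_2$, and Bergman--Dicks supplies a hereditary $k$-algebra $R_0$ with $V(R_0) \cong F$. It then remains to verify that $H$ is braided over $F$. The first half (every $h \in H$ is a countable sum of elements of $F$) is immediate from the notion of a form. For the telescoping half, one starts from an equality $\sum_i a_i = \sum_j b_j$ with all $a_i$,~$b_j \in F$ and sums both sides to obtain $\alpha x_1 + \beta x_2 = \gamma x_1 + \delta x_2$ for suitable cardinals $\alpha,\beta,\gamma,\delta \leq \aleph_0$. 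If this common value has only finite forms, then by (iii) only finitely many $a_i$ and $b_j$ are nonzero, and one is reduced to a finitary statement inside $F \cong V(R_0)$ about two decompositions of a single class of finitely generated projective module, from which the required chain $(u_\mu,v_\mu)$ — with $v_\mu = 0$ at limit stages — is built directly. Otherwise an infinite cardinal occurs among $\alpha,\beta,\gamma,\delta$, and here conditions (i) and (ii) are precisely what is needed to absorb the excess finite part on each side into the $u_\mu$ and to match the infinite parts, producing the indexed partitions and the families $(u_\mu)$, $(v_\mu)$. The hereditary realization criterion then yields $H \cong V^{\aleph_0}(R)$ for a hereditary ring $R$.

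For necessity, assume $H \cong V^{\aleph_0}(R)$ with $R$ hereditary; by the quoted equivalence we may take $R$ to be PDFG, and fix finitely generated projectives $P_1$,~$P_2$ with $[P_k] = x_k$. Since $H$ is non-cyclic, $P_1$,~$P_2 \neq 0$. Condition (iii) follows at once: a finite form of $y \in H$ makes the corresponding module finitely generated, while an infinite form makes it a direct sum of infinitely many nonzero modules (using $P_1, P_2 \neq 0$), and no module is both. For (i) and (ii) one translates the hypotheses into isomorphisms of countably generated projective modules, decomposes both sides into finitely generated summands, and runs a Warfield-type absorption/cancellation analysis of how a finitely generated summand $P_i^{(k)}$ can sit inside $P_j^{(\aleph_0)}$; this forces the absorption $\aleph_0 x_j = \aleph_0 x_i + \aleph_0 x_j$ together with $x_i \in \add(x_j)$ in case (i), and the finite-cancellation conclusion in case (ii), where the hypothesis $x_i \notin \add(x_j)$ is what licenses the cancellation.

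I expect the main obstacle to be twofold. On the sufficiency side it is the telescoping verification: producing the indexed partitions while respecting the limit-stage constraint $v_\mu = 0$ and simultaneously tracking finite versus infinite forms demands careful use of (i) and (ii), and several degenerate configurations — notably $x_i \in \add(x_j)$, which keeps $H$ non-cyclic but collapses $F$ to something nearly cyclic — must be isolated and handled separately, appealing to the cyclic criterion of Lemma~\ref{again}. On the necessity side the real work is the module-theoretic input for (i) and (ii): it is precisely the direct-sum structure theory available over hereditary (equivalently PDFG) rings, and not over arbitrary rings, that makes these conditions necessary, so this step resists a purely monoid-theoretic treatment.
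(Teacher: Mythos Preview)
The paper does not contain its own proof of this statement: Theorem~\ref{hereditarycasecor} is quoted verbatim from \cite[Theorem~5.3]{me} and is used throughout as an external input (for instance in the proof of Theorem~\ref{last}), but no argument for it is given here. There is therefore no proof in the present paper against which to compare your attempt.

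That said, your sketch is broadly aligned with how such a result would be established via the machinery this paper relies on. For sufficiency, you correctly identify that the target is to show $H$ is braided over $\add(x_1+x_2)$ and then invoke \cite[Corollary~4.7]{me}; your observation that condition~(iii) forces $\add(x_1+x_2)$ to consist exactly of the finite forms is the right starting point. The vague part is the ``telescoping half'': you assert that conditions~(i) and~(ii) are ``precisely what is needed'' to produce the braiding data, but this is where all the real work lies, and your sketch does not indicate how the partitions $(I_\mu)$, $(J_\mu)$ and the elements $u_\mu$, $v_\mu$ are actually constructed from an equality $\alpha x_1+\beta x_2=\gamma x_1+\delta x_2$ with some cardinals infinite. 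The case analysis there (which of $\alpha,\beta,\gamma,\delta$ are infinite, and whether $x_i\in\add(x_j)$) is delicate and is not something one can wave through. For necessity, your reduction to PDFG rings and finitely generated $P_1,P_2$ is in the right spirit (compare \cite[Lemma~5.1]{me} and the discussion around Corollary~\ref{useful}), and condition~(iii) does follow as you say; but your treatment of~(i) and~(ii) via a ``Warfield-type absorption/cancellation analysis'' is again only a gesture toward an argument rather than an argument.
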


We note that a two-generated $\aleph_0$-monoid $H$ cannot simultaneously be realized as both a PDFG and a not-PDFG ring. We examine the properties of those with realizations as not-PDFG rings; refer to Propositions \ref{notpdfg} and \ref{1st-case-lemma} for detailed discussions.
To deepen our comprehension of a two-generated $\aleph_0$-monoid $H = \langle {x_1, x_2} \rangle_{\aleph_0}$, we introduce the concept of "type" for these monoids, which hinges on whether $x_i$ belongs to  $\add(x_j)$ or not. Consequently, three distinct possibilities emerge, each of which we scrutinize. 
We establish that if $H$ possesses a realization, it can only conform to one of these types. However, the converse remains uncertain; see Theorem \ref{typeseriuse} for further insights. \\
A ring $R$ is called von Neumann regular (\defit{VNR}) if, for every element $a\in R$, there exists an element $x \in R$ such that $axa=a$. 
When $R$ is VNR, $\V(R)$ becomes particularly interesting as it forms a refinement monoid. Recall that a monoid $H$ is considered a \defit{refinement} if $a + b = c + d$ implies 
that $a = x + y$, $b = z + t$, $c = x + z$, and $d = y + t$, for some 
$x, y, z, t \in H$. This situation can be represented in the form of a square:

\begin{center}
\medskip
\begin{tabular}{|c||c|c|c|}\hline
& c & d \\\hline\hline
a & x & y \\\hline
b & z & t \\\hline
\end{tabular}
\end{center}

For years, as discussed in \cite{pere} for a comprehensive survey, the question of whether a refinement-reduced monoid has a realization to a VNR remained open. Finally, in \cite{AraBosaPardo}, P. Ara, J. Bosa, and E. Pardo provided an affirmative answer to this question in the case
 where the monoid is finitely generated.
In Section 4, we provide a complete understanding of when a two-generated $\aleph_0$-monoid $H = \langle {x_1, x_2} \rangle_{\aleph_0}$ has a realization to VNR hereditary rings. 
It is not difficult to see that this is equivalent to this that the submonoid of $H$ generated by $x_1, x_2$, denoted by $\langle {x_1, x_2} \rangle$,  being a refinement monoid and $H$ being braiding over $\text{add}(x_1 + x_2)$. We provide 
another equivalent characterization by introducing additional conditions to Theorem \ref{hereditarycasecor}; for further details,  see Theorem \ref{last}.

%{\color{red} In Section ?, we provide a complete characterization of $\aleph_0 $-monoids with realization to not-hereditary ring  are provided. }

%{\color{red}
%In Section ?,  a complete characterization of two generated $\aleph_0 $-monoids to regular rings are 
%provided. 

%The goal of this paper is characterizing 
%two generated  $\aleph _0$-monoid that have realization to any ring. 
%We also characterize those with realization to regular rings. }

\smallskip\paragraph{Acknowledgments.}
The author was supported by the Austrian Science Fund (FWF): P 36742-N.  She  would like to thank  Alfred Geroldinger for the mentoring during running this project and also Roozbeh Hazrat and Gene Abrams 
for introducing some papers.

\section{ Realization of $\aleph_0$-monoids: Some background} 
Before delving into the realization problem, we gain a deeper understanding of 
 $\V^{\aleph_0}(R)$, where $R$ is a ring, viewed as a monoid. Since $\V^{\aleph_0}(R)$ is a reduced monoid with 
order-unit $F:= R^{(\aleph_0)}$, it is isomorphic to $\V(S)$ for a hereditary ring $S$, as shown by Bergman and Dicks' realization. However, as we will see in the next lemma, there is a monoid isomorphism between $\V^{\aleph_0}(R)$ and $\V (\End(F_R))$.

Thinking of elements of $F$ as columns, we can identify $\End (F_R)$ with the column finite matrix  ring $\CFM_{\aleph_0}(R)$, consisting of $\aleph_0 \times \aleph_0$ matrices, where each column has only finitely many nonzero entries.

\begin{lemma}\label{columnfinite}
For a ring $R$, let $S := \CFM_{\kappa}(R)$. Then, $\V^{\aleph_0} (R) \cong \V(S)$ as an isomorphism of monoids.
 In particular, when $R$ is a VNR, $S \cong \RCFM(R)$, the ring of row and column finite matrix rings over $R$.
\end{lemma}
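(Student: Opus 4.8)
The plan is to build the isomorphism from the functor $G \coloneqq \Hom_R(F,-)$, where $F \coloneqq R^{(\aleph_0)}$ and $S = \End(F_R)$; recall that choosing the standard basis of $F$ identifies $S$ with $\CFM_{\aleph_0}(R)$, as explained just before the statement. Two preliminary observations drive everything. First, $n\cdot\aleph_0=\aleph_0$ gives $F^{(n)}\cong F$ for every finite $n\ge 1$ (in particular $F\cong F\oplus F$); consequently, since $\Hom_R(F,P)$ is a right $S$-module under precomposition and $G(F)=\Hom_R(F,F)\cong S_S$, additivity yields $G(F^{(n)})\cong S_S^{\,n}$. Second, the objects of $\add(F_R)$ --- summands of finite direct sums of $F$, equivalently (as $F^{(n)}\cong F$) summands of $F$ itself --- are precisely the projective right $R$-modules generated by at most $\aleph_0$ elements, so their isomorphism classes are exactly the elements of $\V^{\aleph_0}(R)$; and $\add(S_S)$ is, by definition, the class of finitely generated projective right $S$-modules, whose classes form $\V(S)$.

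The crux is that $G$ restricts to an equivalence $\add(F_R)\simeq\add(S_S)$. On the single object $F$ the map $\Hom_R(F,F)\to\End_S\big(\Hom_R(F,F)\big)$ induced by $G$ is the canonical ring isomorphism $S\to\End_S(S_S)$ (left multiplication), so $G$ is full and faithful there, hence --- being additive --- on every $F^{(n)}$; and an additive functor that is full and faithful on all finite powers of $F$ is automatically full and faithful on the summand-closure $\add(F_R)$ of $\{F\}$ inside the category of right $R$-modules, by the standard ``lift the splitting idempotent'' argument, with essential image the summand-closure of $\{S_S^{\,n}\}$, that is, $\add(S_S)$. Granting this, set $\Phi\colon\V^{\aleph_0}(R)\to\V(S)$, $\Phi([P])=[G(P)]$. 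It is a well-defined monoid homomorphism since $G$ is additive ($G(P\oplus Q)\cong G(P)\oplus G(Q)$, $G(0)=0$) and it carries the order-unit $[F]=[R^{(\aleph_0)}]$ to $[S_S]=[G(F)]$; it is injective because $G$, being full and faithful on $\add(F_R)$, reflects isomorphisms and lifts any isomorphism of images; and it is surjective because $G$ is essentially surjective onto $\add(S_S)$. Hence $\Phi$ is an isomorphism of monoids. The same conclusion can be reached without categorical language: because $S_S^{\,n}\cong S_S$, every finitely generated projective right $S$-module has the form $eS$ with $e=e^2\in S$; the countably generated projective right $R$-modules are, up to isomorphism, the images $eF$ of idempotents $e\in S$; and the classical criterion $eS\cong fS \iff \exists\, a\in fSe,\ b\in eSf$ with $ab=f$, $ba=e$ translates verbatim, via restriction to the images of $e$ and $f$, into $eF\cong fF$. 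In this form, checking that the correspondence respects addition --- e.g.\ by splitting $F=A\oplus B$ with $A\cong F\cong B$ --- is the only slightly fiddly point.

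I expect the main obstacle to be making this ``local Morita'' step precise and self-contained: the full faithfulness of $G$ on $\add(F_R)$, together with the identification of its essential image with $\add(S_S)$, needs a careful (if routine) idempotent argument, and in the elementary reformulation the analogous care goes into the bookkeeping for additivity; everything else is formal. For the final clause, assume $R$ is von Neumann regular; here the claim is a \emph{ring} isomorphism $S=\CFM_{\aleph_0}(R)\cong\RCFM_{\aleph_0}(R)$, not an equality of matrix sets, since a column-finite matrix need not be row-finite in general. I would derive it from the good decomposition theory of projective modules over a VNR ring --- over such $R$ every countably generated projective module, in particular $F$, is a direct sum of finitely generated projectives, and these can be arranged so that every $R$-endomorphism of $F$ becomes a matrix that is finite in both rows and columns --- or simply invoke the known identification of $\End(R^{(\aleph_0)}_R)$ with $\RCFM_{\aleph_0}(R)$ for regular $R$. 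This step is essentially decoupled from the monoid computation, and I would present it as a short coda.
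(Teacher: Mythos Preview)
Your proposal is correct and follows essentially the same route as the paper: both use the functor $\Hom_R(F,-)$ with $F=R^{(\aleph_0)}$ to set up an equivalence $\add(F_R)\simeq\add(S_S)$ and then pass to isomorphism classes, and both treat the VNR clause as a known fact. The only difference is packaging: the paper dispatches the equivalence by citing \cite[Theorem~2.35]{Facchini} and the VNR identification by citing \cite{AraPardoPardo}, whereas you unfold the ``local Morita'' argument and sketch the regular case by hand.
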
 

\begin{proof}
For a given right $R$-module $M$, let $\add(M_R)$ be the class of modules, such as $N$, where $N$ is isomorphic to a direct summand of a finite direct sum of $M$. There exists an equivalence between the full subcategory $\add(M)$ and finitely generated projective $\text{End}(M)$-module, given by $\text{Hom}_R(M, -)$ or, conversely, by $- \otimes_{\text{End}(M)} M$ (see \cite[Theorem 2.35]{Facchini}). Now, for $F := R^{(\aleph_0)}$, it is evident that $\add(F)$ is precisely the class of countably generated projective modules.
 Therefore,
 the map $\phi : \V^{\aleph_0}(R) \to \V(S)$ defined by $\phi([P]) = [\text{Hom}_R(F, P)]$ is an isomorphism of monoids. The rest of the lemma follows from \cite{AraPardoPerera}.
\end{proof}

  If we rephrase the braiding concept from \cite{me} for $\aleph_0$-monoids, we get:

\begin{defi} {\rm  \cite [Definition 3.1]{me}} \label{def:braiding}
  Let $H$ be an $\aleph_0$-monoid and  $X$ be a submonoid (not necessarily an  $\aleph_0$-submonoid) of $H$.  
  \begin{enumerate}
  \item Two families $(x_i)_{i \geq 0}$ and $(y_i)_{i \geq 0}$ in $X$ are {\defit{braided}} if there exist indexed partitions $(I_{n})_{n \geq 0}$ and $(J_n )_{n \geq 0}$ of
   $\aleph_0$ with $\card{I_n}$,~$\card{J_n}$ finite, and families $(u_n )_{n \geq 0}$ and $(v_n)_{n \geq 0}$ in $X$ such that $v_0 = 0$, and 
    \[
      \sum_{i \in I_n} x_i = v_n + u_n,
      \qquad\text{and}\qquad
      \sum_{j \in J_n} y_j = v_{n+1} + u_{n} \qquad \text{for all } n \geq 0.
    \]
  \item  $H$ is {\defit{braided over $X$}} if $H = \langle X \rangle_{\aleph_0}$, and all families $(x_i)_{n \geq 0}$, $(y_j)_{n \geq 0}$ in $X$ 
  with $ \sum\limits_{i \in \aleph_0} x_i  =  \sum\limits_{j \in \aleph_0}  y_j$ are  braided.
  \end{enumerate}
\end{defi}

In this article, while working with $\aleph_0$-monoids, our objective is to offer a clearer understanding of \cite[Theorem 4.3]{me} and introduce a slightly different concept of braiding,
 from what is defined in Definition \ref{def:braiding}. The proof follows a similar structure to that of \cite[Theorem 4.3]{me}, but we specify in Definition \ref{def:braiding} that 
 $I_{n}$ and $J_n$ are intervals in $\aleph_0$, particularly in the form $\{ k \mid m_1 \leq k \leq m_2 \}$, where $m_1, m_2 \in \aleph_0$.

\begin{teor} \label{directsumofinfinite}
Suppose that the $\aleph_0$-monoid $H$ has a realization. Then, there exists an element $x$ such that whenever $\{x_i \mid i \geq 0\}$ and $\{y_i \mid i \geq 0\}$ are families in $\add(x)$, then
$$
\sum_{{i \in \aleph_0}} x_i = \sum_{{i \in \aleph_0}} y_i
$$
if and only if there exist integers $0 = m_0 < m_1 < \cdots$ and $0 \leq n_0 < n_1 < \cdots$ and sequences $u_0, u_1, u_2, \cdots$ and 
$v_0 = x_0, v_1, v_2, \cdots$ in $\add(x)$ such that
\begin{align*}
&\sum_{i = 0}^{n_0} y_i = v_0 + u_0  \\
&\sum_{i = m_j + 1}^{m_{j + 1}} x_i = u_ j  + v_{j + 1}, {\text{ for every } } j \geq 0, {\text {and}}\\
&\sum_{i = n_j + 1}^{n_{j + 1}} y_i = u_{j + 1}  + v_ {j + 1}, {\text{ for every } } j \geq 0.
\end{align*}
\end{teor}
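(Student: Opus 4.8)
The plan is to work through a concrete realization. Since $H$ has a realization, fix a ring $R$ with $H\cong V^{\aleph_0}(R)$, identify the two, and take $x:=[R]$, so that $\add(x)$ is exactly the submonoid of classes of finitely generated projective $R$-modules. This $x$ is among the elements supplied by Theorem~\ref{amix}: every countably generated projective module is a direct summand of $R^{(\aleph_0)}$, so $\add(\aleph_0 x)=H$, condition~(1) is the hypothesis, and condition~(2) is immediate. It then remains to prove the asserted biconditional for this $x$, and I would treat the two implications separately.

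For the implication $(\Leftarrow)$, assume the integers $0=m_0<m_1<\cdots$, $0\le n_0<n_1<\cdots$ and sequences $(u_j)_{j\ge0}$, $(v_j)_{j\ge0}$ with $v_0=x_0$ are given. The families $\{0\}$, $\{m_0+1,\dots,m_1\}$, $\{m_1+1,\dots,m_2\}$, $\dots$ and $\{0,\dots,n_0\}$, $\{n_0+1,\dots,n_1\}$, $\dots$ are partitions of $\aleph_0$ into finite blocks exhausting $\aleph_0$ (the $m_j$ and $n_j$, being strictly increasing integers, are unbounded), so axioms~\ref{a:trivial} and~\ref{a:flatten} give
\[
  \sum_{i\in\aleph_0} x_i \;=\; x_0 + \sum_{j\ge0}\Big(\sum_{i=m_j+1}^{m_{j+1}} x_i\Big) \;=\; v_0 + \sum_{j\ge0}(u_j+v_{j+1}),
\]
\[
  \sum_{i\in\aleph_0} y_i \;=\; \Big(\sum_{i=0}^{n_0} y_i\Big) + \sum_{j\ge0}\Big(\sum_{i=n_j+1}^{n_{j+1}} y_i\Big) \;=\; (v_0+u_0) + \sum_{j\ge0}(u_{j+1}+v_{j+1}).
\]
Reindexing the countable sums once more with~\ref{a:flatten}, both right-hand sides equal $v_0+u_0+v_1+u_1+v_2+u_2+\cdots$, hence $\sum_i x_i=\sum_i y_i$.

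For the implication $(\Rightarrow)$, I would pass to modules. Given families $(x_i)$, $(y_i)$ in $\add(x)$ with $\sum_i x_i=\sum_i y_i$, choose finitely generated projective modules $X_i$, $Y_i$ with $[X_i]=x_i$, $[Y_i]=y_i$; then $\bigoplus_i X_i\cong\bigoplus_i Y_i$, and after transporting along this isomorphism we may assume $M:=\bigoplus_{i\ge0} X_i=\bigoplus_{i\ge0} Y_i$ as internal direct sums in one countably generated projective module $M$. Then I would construct recursively a nested chain of finitely generated submodules of $M$,
\[
  X_0=\alpha_0\subseteq\beta_0\subseteq\alpha_1\subseteq\beta_1\subseteq\cdots,
  \qquad \alpha_j=\bigoplus_{i\le m_j} X_i,\quad \beta_j=\bigoplus_{i\le n_j} Y_i,
\]
with $0=m_0<m_1<\cdots$ and $0\le n_0<n_1<\cdots$ strictly increasing, together with finitely generated submodules $U_j$, $V_{j+1}$ of $M$ satisfying $\beta_j=\alpha_j\oplus U_j$ and $\alpha_{j+1}=\beta_j\oplus V_{j+1}$ as internal direct sums. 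The recursive ``absorption'' step: given the finitely generated direct summand $\beta_j$ of $M=\bigoplus_i X_i$, pick $m_{j+1}>m_j$ with $\beta_j\subseteq\alpha_{j+1}:=\bigoplus_{i\le m_{j+1}} X_i$ (possible as $\beta_j$ is finitely generated, enlarging if necessary); the projection $M\twoheadrightarrow\beta_j$ with kernel $\bigoplus_{i>n_j} Y_i$ restricts on $\alpha_{j+1}$ to a split epimorphism onto $\beta_j$ (split by the inclusion), so $\alpha_{j+1}=\beta_j\oplus V_{j+1}$ with $V_{j+1}\cong\alpha_{j+1}/\beta_j$ a finitely generated projective direct summand of $\alpha_{j+1}\in\add(x)$, hence $[V_{j+1}]\in\add(x)$; obtaining $\beta_{j+1}$ from $\alpha_{j+1}$ is symmetric, and one starts from $\alpha_0:=X_0$. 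Since the $m_j$, $n_j$ are strictly increasing, the induced blocks partition $\aleph_0$. Finally the three families of identities come out by comparing quotients along the chain: from $\alpha_j\subseteq\beta_j\subseteq\alpha_{j+1}$ with $\alpha_{j+1}=\beta_j\oplus V_{j+1}$ and $\beta_j/\alpha_j\cong U_j$ one gets $\bigoplus_{i=m_j+1}^{m_{j+1}} X_i\cong\alpha_{j+1}/\alpha_j\cong U_j\oplus V_{j+1}$, i.e.\ $\sum_{i=m_j+1}^{m_{j+1}} x_i=u_j+v_{j+1}$ with $u_j:=[U_j]$, $v_{j+1}:=[V_{j+1}]$; similarly $\bigoplus_{i=n_j+1}^{n_{j+1}} Y_i\cong\beta_{j+1}/\beta_j\cong U_{j+1}\oplus V_{j+1}$; and $\beta_0=X_0\oplus U_0$ gives $\sum_{i=0}^{n_0} y_i=x_0+u_0=v_0+u_0$ with $v_0:=[X_0]=x_0$.

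The step that needs the most care — and the main obstacle — is obtaining a decomposition by \emph{finite} blocks (so that $\sum_{i=m_j+1}^{m_{j+1}} x_i$ really is a finite sum): this is exactly why $x$ must be chosen so that the members of $\add(x)$ are finitely generated, and hence why the conclusion is asserted only for a particular $x$ and not for every $x\in H$. A related subtlety is that $V^{\aleph_0}(R)$ need not be cancellative, so the identities above cannot be extracted by cancelling $[\alpha_j]$ in a monoid equation; one must keep $\alpha_j$ a genuine common direct summand of $\alpha_{j+1}$ in two decompositions and compare the honest quotient modules. Arranging $\alpha_j,\beta_j,U_j,V_j$ as submodules in a single nested chain is the device that makes this possible, and it refines the argument of \cite[Theorem 4.3]{me} so as to produce interval blocks.
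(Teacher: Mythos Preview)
Your proof is correct and follows essentially the same back-and-forth absorption argument as the paper: both take $x=[R]$ under a realization $H\cong V^{\aleph_0}(R)$, and both exploit that a finitely generated direct summand of $M=\bigoplus X_i=\bigoplus Y_i$ lies in a finite partial sum and splits off there, producing the complements $U_j,V_{j+1}$ inductively. The only cosmetic difference is that you identify the two direct sums as one module with two internal decompositions, whereas the paper shuttles between them via an explicit isomorphism $f$ and its inverse $g$; you also spell out the $(\Leftarrow)$ direction that the paper simply declares ``clear.''
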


\begin{proof}
$ \Longleftarrow $ is clear. 
\\
For the other side, 
let $R$ be the realization of $H$, and 
$\phi : H \to \V^{\aleph_0}(R)$
 be an $\aleph_0$-isomorphism and 

\begin{equation}
\sum_{{i \in \aleph_0}} x_i = \sum_{{i \in \aleph_0}} y_i \tag{Eq 2.1} \label{Eq 2.1}
\end{equation}

Take $x$ to be the element such that $\phi(x) = [R]$. 
Then clearly whenever $t \in \add(x)$ and  $\phi(t) = [X]$, then $X$  is finitely generated. 
 For each $i \geq 0,$ let $A_i$ and $B_i$ be modules such that   $[A_i] = \phi(x_i)$ and $[B_i ]= \phi(y_i)$. Following \ref{Eq 2.1} and using the fact that 
$\phi$ is an $\aleph_0$-isomorphism, we  conclude that

\begin{equation}
\bigoplus_{i \geq 0} A_i \cong  \bigoplus_{i \geq 0} B_i \tag{Eq 2.2} \label{Eq 2.2}
\end{equation}

Let  $f:  \bigoplus_{i \geq 0}  A_i \to  \bigoplus_{i \geq 0} B_i$ be an isomorphism and show its inverse with 
 $g:  \bigoplus_{i \geq 0}  B_i \to   \bigoplus_{i \geq 0} A_i$. 
We have $m_0 = 0$. Since $f(A_0)$ is
 finitely generated, there exists $n_0$ such that $f(A_0) \leq \bigoplus_{i = 0}^{n_0} B_i$. Since $A_0$ is a direct summand of $ \bigoplus_{i \geq 0} A_i$, so is its
  image in $ \bigoplus_{i = 0}^{n_0} B_i$, see \cite [Lemma 2.1, p.33]{fac}. Thus, $\bigoplus_{i = 0}^{n_0} B_i = f(A_0) \bigoplus T^{y}_0$, for some $R$-module $T^{y}_0$, and so

\begin{equation}
\bigoplus_{i = 0}^{n_0} g(B_i) = A_0 \bigoplus g(T^{y}_0).  \tag{Eq 2.3} \label{Eq 2.3}
\end{equation} 

Then put $u_0 := \phi^{-1}([T^y_0])$. 
Note that  $g$ is  isomorphim and so $g(B_i) \cong B_i$ and also 
$\phi$ is a monoid isomorphism and so 
we get to the first equation asked in the theorem, that is $\sum_{i = 0}^{n_0} y_i = v_0 + u_0$, where $v_0 := x_0$. \\

By a similar argument, since $\bigoplus_{i = 0}^{n_0} g(B_i) $ is finitely generated submodule of $ \bigoplus_{i \geq 0} A_i$, 
 there exist
 $m_1 > m_0 = 0$ and $R$-module $T^{x}_1$ such that 
$\bigoplus_{i = 0}^{m_1} A_i = \bigoplus_{i = 0}^{n_0} g(B_i) \oplus T_1 ^{x}$. 
 Therefore,
 $\bigoplus_{i = 0}^{m_1} A_i = A_0 \bigoplus g(T^y_0) \bigoplus T^x_1$,
 and so $\bigoplus_{i = {m_0 + 1}}^{m_1} A_i \cong g(T^{y}_0) \bigoplus T^{x}_1$,
  noting that $m_0 = 0$. Since $g$ is an isomorphism,  $g(T^y_0) \cong T^y_0$ and consequently 
 $u_0 = \phi^{-1}([g(T^{y}_0)])$. Now put $v_1 := \phi^{-1}([T^{x}_1])$.  Thus:

$$\sum_{i = m_{0} + 1}^{m_{1}} x_i = u_ 0  + v_{1}$$

Now, by induction, suppose that for $k \geq 1$, integers $n_0, \cdots, n_{k-1}$ and $m_0, \cdots, m_k$, and elements $u_0, \cdots, u_{k-1}, v_0, \cdots, v_k \in \add(x)$, and a 
finitely generated module $T^x_k$ are found, such that for them all equations given in the theorem hold, and moreover:
\begin{equation}
\bigoplus_{i=0}^{m_k} A_i = g(\bigoplus_{i=0}^{n_{k-1}} B_i) \bigoplus T^x_k \tag{Eq 2.4} \label{Eq 2.4}
\end{equation}
 and $v_k = \phi^{-1}([T^x_k])$. Now, we find integers $n_k$, $m_{k+1}$, and $u_k$, $v_{k+1} \in \text{add}(x)$ satisfying the conditions of the theorem.
 Since $f(\bigoplus_{i = 0}^{m_{k}}  A_i)$ is finitely generated, there exists $n_k > n_{k - 1}$ and finitely generated module $T^y_k$ such that 
\begin{equation}  
\bigoplus_{i = 0}^{n_{k}}  B_i = f(\bigoplus_{i = 0}^{m_{k}}  A_i) \bigoplus T^y_k \tag{Eq 2.5}  \label{Eq 2.5}
\end{equation}

Then $ u_k:= \phi^{- 1} ( [T^y_k] ) $ and from \ref{Eq 2.4}, we conclude that 
$$
\bigoplus_{i = 0}^{n_{k}}  B_i = \bigoplus_{i = 0}^{n_{k - 1}} B_i \bigoplus f(T^x_k)  \bigoplus T^y_k 
$$

Consequently, $
\sum_{i = n_{k-1} + 1}^{n_{k}} y_i = u_k + v_k  $. 

Now, similarly, since $g (\bigoplus_{i = 0}^{n_{k}}  B_i ) $ is finitely generated, 
there exists $ m_{k + 1} > m_{k}$ and finitely generated module $ T^x_{k + 1}$  such that

 $$ \bigoplus_{i = 0}^{m_{k + 1} }  A_i = g(\bigoplus_{i = 0}^{n_{k}} B_i) \bigoplus T^x_{k + 1}$$

That is we set $ v_{k + 1}:= \phi^{- 1} ( [T^x_{k+ 1}] )  $ 
and using \ref{Eq 2.5}
we conclude  that
$
\sum_{i = m_{k} + 1}^{m_{k + 1}} x_i = u_k + v_{k + 1}  $. 

\end{proof}

\begin{defi}\label{again1}
The element $x$ given in Theorem \ref{directsumofinfinite} is called a \defit{braiding element}. The proof of Theorem \ref{directsumofinfinite}
is showing that 
$[R]$ is a braiding element in $\V ^{\aleph_0} (R)$. 
\end{defi}

As a corollary, we have:

\begin{cor} \label{summeryit}
  If $x$ is a braiding element and $\aleph_0 z_1 = \aleph_0 z_2$ for some $z_1, z_2 \in \add(x)$, then $\add(z_1) = \add(z_2)$. % for every $1 \leq i ,  j \leq 2$.
\end{cor}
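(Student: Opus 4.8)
The plan is to instantiate Theorem~\ref{directsumofinfinite} at the constant families. Since $z_1,z_2\in\add(x)$ and, by the summation conventions recalled above, $\sum_{i\in\aleph_0} z_1=\aleph_0 z_1=\aleph_0 z_2=\sum_{i\in\aleph_0} z_2$, applying the theorem to $x_i\coloneqq z_1$ and $y_i\coloneqq z_2$ (all $i\geq 0$) yields integers $0=m_0<m_1<\cdots$, $0\leq n_0<n_1<\cdots$ and sequences $(u_j)_{j\geq 0}$, $(v_j)_{j\geq 0}$ in $\add(x)$ with $v_0=x_0=z_1$. Only the first of the three displayed identities will be used: it reads $\sum_{i=0}^{n_0} y_i=v_0+u_0$, i.e. $(n_0+1)z_2=z_1+u_0$ with $u_0\in H$ and $n_0+1\geq 1$, which by the definition of $\add$ means exactly $z_1\in\add(z_2)$.

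Next I would observe that the hypothesis $\aleph_0 z_1=\aleph_0 z_2$ is symmetric in $z_1$ and $z_2$, so applying the theorem once more, now with $x_i\coloneqq z_2$ and $y_i\coloneqq z_1$, gives $z_2\in\add(z_1)$. Finally, I would invoke the routine fact that $\add$ is ``monotone'': if $z_1\in\add(z_2)$, say $z_1+w'=m z_2$ with $m\geq 1$, then for any $y\in\add(z_1)$, writing $y+w=n z_1$ with $n\geq 1$, one gets $y+(w+nw')=n(z_1+w')=nm\,z_2$ with $nm\geq 1$, so $y\in\add(z_2)$; hence $\add(z_1)\subseteq\add(z_2)$. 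Applying this in both directions yields $\add(z_1)=\add(z_2)$.

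This is a short bookkeeping consequence of Theorem~\ref{directsumofinfinite}, so I do not expect a real obstacle; the only points needing a moment's care are that $v_0$ is forced to equal $z_1$ (because $v_0=x_0$ and the first family is constant equal to $z_1$), and the positivity of the multiplicities $n,m$ in the monotonicity step, which is what makes the memberships in $\add$ go through.
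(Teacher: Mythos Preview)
Your proof is correct and follows essentially the same route as the paper: instantiate Theorem~\ref{directsumofinfinite} with the constant families $x_i=z_1$, $y_i=z_2$, read off $(n_0+1)z_2=z_1+u_0$ from the first displayed identity (using $v_0=x_0=z_1$), conclude $z_1\in\add(z_2)$, then invoke symmetry. The only difference is that you spell out the monotonicity of $\add$ explicitly, whereas the paper passes directly from $z_1\in\add(z_2)$ and $z_2\in\add(z_1)$ to $\add(z_1)=\add(z_2)$.
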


\begin{proof}
  Let $\aleph_0 z_1 = \aleph_0 z_2$. Then, by Theorem \ref{directsumofinfinite}, considering all $x_i$ in
   Theorem \ref{directsumofinfinite} as $z_1$ and all $y_i$  as $z_2$, we find $n_0 \geq  0$ and $u_0$ such that
  \[
    (n_0 + 1) z_2 = \sum_{i = 0}^{n_0} z_2 = z_1 + u_0.
  \]
  That is, $z_1 \in \add(z_2)$.  By a similar argument, we have  $z_2 \in \add(z_1)$. Consequently, $\add(z_1) = \add(z_2)$. 
\end{proof}

\begin{defis}
We need to define and recall the following concepts: 
\begin{enumerate}
\item If $R$ is a ring whose projectives are direct sums of finitely generated ones, we say $R$ is  {\defit {PDFG.}} 
\item 
We call   an $\aleph_0$-monoid $H$  \defit{hereditary} if it has a realization to a  hereditary ring $R$.
\item Recalling 
 from  \cite{me}, if  
$H$ is an  $\aleph_0$-monoid and $x \in H$, then $H$ is called braided over $\add(x)$, whenever:
\begin{enumerate}

 \item    $\add( x)$ is a generating set.
 \item Whenever  $\sum\limits_{i \in \aleph_0} x_i  =  \sum\limits_{i \in \aleph_0} y_i $, there exist 
  partitions $(I_{\mu})_{\mu \in \aleph_0}$ and $(J_\mu)_{\mu \in \aleph_0}$ of $\aleph_0$, 
  and families $(u_\mu)_{\mu \in  \aleph_0}$ and $(v_\mu)_{\mu \in  \aleph_0}$ in $\add(x)$ such that $v_0=0$, each $I_{\mu}$ and $J_\mu$ is finite such that the followings hold: 
    \[
      \sum_{i \in I_\mu} x_i = v_\mu + u_\mu, {}
      \sum_{j \in J_\mu} y_j = v_{\mu+1} + u_{\mu} \qquad \text{for all } \mu \in \aleph_0.
    \]
\end{enumerate}
\end{enumerate}
\end{defis}

Clearly, when $x$ is a braiding element, see Definition \ref{again1},   and $H = \langle  \add(x)  \rangle_{\aleph_0}$, then $H$ is braided 
over $ \add(x) $ according to Part (3) of the above definition. The converse is also true:

\begin{cor} \label{useful}
For an $\aleph_0$-monoid $H$, the followings are equivalent:
\begin{enumerate} [itemsep=5pt]
    \item $H$ has a realization to a PDFG ring.
    \item There is a braiding element $x$ such that $\add(x)$ is a generating set for   $H$.
    \item  There is an element $x \in H$ such that $H$ is braided  over $\add(x)$. 
    \item $H$ is hereditary. 
\end{enumerate}
\end{cor}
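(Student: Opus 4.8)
The plan is to prove the cycle of implications $(1)\Rightarrow(2)\Rightarrow(3)\Rightarrow(4)\Rightarrow(1)$, using the machinery already established above.

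\textbf{Proof of $(1)\Rightarrow(2)$.} Suppose $H$ has a realization to a PDFG ring $R$, so we have an $\aleph_0$-isomorphism $\phi\colon H\to \V^{\aleph_0}(R)$. Take $x$ to be the element with $\phi(x)=[R]$, which is a braiding element by Theorem \ref{directsumofinfinite} and Definition \ref{again1}. It remains to check that $\add(x)$ generates $H$ as an $\aleph_0$-monoid. Since $R$ is PDFG, every projective right $R$-module $P$ generated by at most $\aleph_0$ elements is a direct sum of finitely generated projective modules $P=\bigoplus_{i\geq 0}P_i$; each $[P_i]$ lies in $\add([R])=\phi(\add(x))$, so $[P]=\sum_{i\in\aleph_0}[P_i]$ exhibits $[P]$ as a countable sum of elements of $\phi(\add(x))$. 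Pulling back along $\phi^{-1}$ shows every element of $H$ is a countable sum of elements of $\add(x)$, i.e. $H=\langle\add(x)\rangle_{\aleph_0}$.

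\textbf{Proof of $(2)\Rightarrow(3)$.} This is exactly the remark immediately preceding the statement: if $x$ is a braiding element and $\add(x)$ generates $H$, then unwinding Definition \ref{directsumofinfinite}'s conclusion — where the intervals $[m_j+1,m_{j+1}]$ and $[n_j+1,n_{j+1}]$ partition $\aleph_0$ and the $u_j,v_j$ lie in $\add(x)$ with $v_0=0$ except that here $v_0=x_0$; one absorbs $x_0$ into $u_0$ or re-indexes to match the bookkeeping of Part (3) of \textbf{Definitions} — gives precisely that $H$ is braided over $\add(x)$. So one just has to translate the interval-partition formulation of Theorem \ref{directsumofinfinite} into the partition formulation of braiding, which is routine.

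\textbf{Proof of $(3)\Rightarrow(4)$ and $(4)\Rightarrow(1)$.} For $(3)\Rightarrow(4)$, the natural route is to invoke \cite[Corollary 4.7]{me} / the $\aleph_0$-version of \cite[Theorem 4.3]{me}: if there is $x$ with $H$ braided over $\add(x)$, then — since $\add(x)$ with countable sums is $\add(\aleph_0 x)$, which as an $\aleph_0$-monoid generated by a single-type family of finitely generated summands has a realization — the braiding condition is exactly what is needed to lift this to a realization of $H$, and moreover the resulting ring can be taken hereditary (this is the "$H$ braided over $\add(x)$ implies realization to hereditary rings" statement quoted in the abstract). Thus $H$ is hereditary. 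Finally $(4)\Rightarrow(1)$ is the observation flagged in the introduction: a hereditary ring is PDFG, since over a (right) hereditary ring every projective module is a direct sum of finitely generated (indeed, of principal right ideal) submodules by Kaplansky's theorem; hence a realization to a hereditary ring is in particular a realization to a PDFG ring. The main obstacle is $(3)\Rightarrow(4)$: one must either cite the precise form of the realization theorem from \cite{me} that produces a \emph{hereditary} ring from a braiding datum, or else re-run that construction, checking that $\add(\aleph_0 x)$ realizes (cyclic-type case, cf. Lemma \ref{again}) and that braiding over $\add(x)$ supplies the compatibility needed by Theorem \ref{amix}; the other three implications are essentially bookkeeping.
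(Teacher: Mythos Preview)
Your proof is correct and follows essentially the same route as the paper: $(1)\Rightarrow(2)$ by pulling back $[R]$ along the $\aleph_0$-isomorphism and using the PDFG hypothesis, $(2)\Rightarrow(3)$ as an immediate consequence of the definitions, $(3)\Rightarrow(4)$ by citing \cite[Corollary~4.7]{me}, and $(4)\Rightarrow(1)$ by the classical fact (Albrecht/Kaplansky) that projectives over hereditary rings decompose into finitely generated summands. Your extended discussion around $(3)\Rightarrow(4)$ is unnecessary---the paper simply cites \cite[Corollary~4.7]{me} and moves on, since that result already delivers a hereditary realization directly from the braiding hypothesis---so you can drop the hedging about ``re-running the construction'' and the reference to Lemma~\ref{again}.
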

\begin{proof}
 \begin{proofenumerate}
%\begin{enumerate}
    \item [$(1) \implies (2)$.] 
    Let $H$ have a realization to a PDFG ring. Then we have an $\aleph_0$-monoid isomorphism, denoted by $f$, from $H$ to $\V(R)$ for a PDFG ring $R$. 
    Let $x := f^{-1} ([R])$. Then, since 
    $[R]$ is a braiding element in $\V ^{\aleph_0} (R)$, so is      
     $x$  in $H$. Moreover, since $f$ is an $\aleph_0$-morphism, so is $f^{-1}$, and therefore:
    \[
    \begin{aligned}
        H & = f^{-1}(\V ^{\aleph_0} (R)) \\
        & = f^{-1}(  \langle \add ([R])  \rangle _{\aleph_0}) \\
        & =   \langle  \add (f^{-1}([R]))  \rangle_{\aleph_0} \\
        & =  \langle  \add(x)  \rangle_{\aleph_0}.
    \end{aligned}
    \]
    Where, the second equality holds because $R$ is  PDFG. This proves (2).
    
    \item[$(2) \implies (3)$] is clear. 
    
     \item[$(3) \implies (4)$]  is   \cite [Corollary 4.7]{me}. 
     
    \item[$(4) \implies (1)$.] It is known that over a hereditary ring, every projective is a direct sum of finitely generated ones,  see for example \cite{Albrecht}.  
  %  \end{enumerate}
\end{proofenumerate}
\end{proof}

\begin{remark}
Note that $x$ is an order-unit in $\operatorname{add}(x)$, and according to  Bergman's realization, the monoid isomorphism
from the reduced commutative monoid $M$ with order unit $x$ to 
$\V(R)$
 can take $x$  to 
$[R]$. Consequently, when $H$ is braided over $\operatorname{add}(x)$, then by 
\cite[Corollary 4.7]{me}, we find  an isomorphism of $\aleph_0$-monoids $H$ and $\mathcal{V}^{\aleph_0}(R)$ such that $x$ maps to $[R]$. This means $x$ is a braiding element.
\end{remark}

%We recall the following lemma as a consequence of \cite [Theorem 3.9]{me}:

%\begin{lemma} \label{consequence}
%Let $H$ and $H'$ be two $\aleph_0$-monoids with braiding elements $x, x'$, respectively such that $H = \langle  {\add}(x)   \rangle _{\aleph _0}  $ and 
%$H' = \langle  {\add}(x')   \rangle _{\aleph _0}  $. 
%Then $H \cong_{\aleph_0} H'$ if and only if there is a monoid isomorphism $  {\add}(x) \cong {\add}(x')$. 
%Moreover, restricting a $\aleph_0$-monoid isomorphism from $H$ to $H'$ yields an isomorphism of monoids $ {\add}(x) \cong {\add}(x')$.
%\end{lemma}

\section{Type of $\aleph_0$-monoids}

We begin this section with the realization of cyclic $\aleph_0$-monoids.
When $M$ is a module and  $ 0 \leq \alpha \leq \aleph_0 $, then $ \alpha M$ denotes 
direct sum of $ \alpha$ many copies of $M$.

\begin{lemma}\label{again}
For a cyclic $\aleph_0$-monoid $H := \langle x \rangle_{\aleph_0}$, the following are equivalent:
\begin{enumerate} 
\item $H$ has a realization. 
\item $H$ is a hereditary. 
\item $\aleph_0 x \neq nx$ for every $n < \aleph_0$.
\end{enumerate}
\end{lemma}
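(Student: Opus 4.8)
The plan is to establish $(2)\Rightarrow(1)\Rightarrow(3)\Rightarrow(2)$. The first implication is immediate, a hereditary ring being in particular a ring, and we may assume throughout that $x\neq 0$ (if $H=\{0\}$ then, with the usual convention that rings are nonzero, $H$ has no realization and all three statements fail).

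For $(1)\Rightarrow(3)$ I would argue the contrapositive. Suppose $\aleph_0 x=nx$ for some finite $n$, and suppose towards a contradiction that $\phi\colon H\to\V^{\aleph_0}(R)$ is an isomorphism of $\aleph_0$-monoids; since $\phi(x)\neq 0$ we have $R\neq 0$. Fix a projective module $P$ with $[P]=\phi(x)$. As $H=\langle x\rangle_{\aleph_0}$ and $\phi$ is an $\aleph_0$-isomorphism, $\V^{\aleph_0}(R)=\langle[P]\rangle_{\aleph_0}$, so $[R]=\gamma[P]$ for some $0\le\gamma\le\aleph_0$; applying $\phi$ to $\aleph_0 x=nx$ gives $P^{(\aleph_0)}\cong P^{n}$, whence $[R]=m[P]$ for some finite $m\ge 1$ (take $m=\gamma$ if $\gamma$ is finite, and $m=n$ if $\gamma=\aleph_0$). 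Then $R\cong P^{m}$, so $P$ is a direct summand of $R$, hence finitely generated, and $R^{(\aleph_0)}\cong P^{(\aleph_0)}\cong P^{n}$ is finitely generated --- contradicting that, for $R\neq 0$, the module $R^{(\aleph_0)}$ is not finitely generated (a finitely generated submodule is supported on finitely many coordinates).

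The implication $(3)\Rightarrow(2)$ is the substantial one; its crux is that condition (3) determines the whole $\aleph_0$-monoid $H$ from the ordinary submonoid $M:=\langle x\rangle$ generated by $x$. Indeed, (3) says $\aleph_0 x$ is not a finite multiple of $x$, so $H=M\sqcup\{z\}$ with $z:=\aleph_0 x$; using axiom~\ref{a:flatten} one verifies that $z$ absorbs every element ($w+z=z$) and every infinite sum, and that for a family $(n_ix)_{i\in\aleph_0}$ in $M$ one has $\sum_i n_i x$ equal to the finite sum in $M$ when only finitely many $n_i$ are nonzero, and equal to $z$ otherwise (flatten $\sum_i\sum_{k<n_i}x$ along a bijection $\aleph_0\times\aleph_0\cong\aleph_0$). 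Thus a cyclic $\aleph_0$-monoid satisfying (3) is, as an $\aleph_0$-monoid, determined up to isomorphism by the monoid $M$. Now $M$ is $1$-generated --- hence finitely generated --- reduced, with $x$ an order-unit, so the Bergman--Dicks realization~\cite{Bergman,BergWar} supplies a hereditary ring $R_0$ with $\V(R_0)\cong M$ via an isomorphism carrying $x$ to $[R_0]$; in particular $R_0\neq 0$. Being hereditary, $R_0$ is PDFG, so Corollary~\ref{useful} gives $\V^{\aleph_0}(R_0)=\langle\add([R_0])\rangle_{\aleph_0}=\langle[R_0]\rangle_{\aleph_0}$ (since $\add([R_0])=\V(R_0)=\langle[R_0]\rangle$); this is a cyclic $\aleph_0$-monoid whose submonoid $\langle[R_0]\rangle$ equals $\V(R_0)\cong M$, and it satisfies (3) because $R_0^{(\aleph_0)}$ is not finitely generated. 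By the determination above, $\V^{\aleph_0}(R_0)\cong H$, so $H$ is realized by the hereditary ring $R_0$.

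The step I expect to require the most care is the rigidity claim in the third paragraph --- it is not deep, but it is where the $\aleph_0$-monoid axioms really get used. Two points deserve attention: first, that an $\aleph_0$-sum of finite multiples of $x$ with infinitely many nonzero summands collapses to $\aleph_0 x$, which is exactly where \ref{a:flatten} enters; second, that (3) is used only in the form ``$\aleph_0 x$ is not a finite multiple of $x$'', and this should not be conflated with the weaker periodicity relations that the monoid $\langle x\rangle$ may itself satisfy (so that, in particular, one does not need to classify $\langle x\rangle$). Everything else, including the module-theoretic implication $(1)\Rightarrow(3)$, is routine once the non-finite-generation of $R^{(\aleph_0)}$ is observed.
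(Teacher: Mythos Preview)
Your proof is correct and follows essentially the same route as the paper's. Both argue $(1)\Rightarrow(3)$ by noting that the generator must correspond to a finitely generated projective (since $[R]$ lies in $\langle[P]\rangle_{\aleph_0}$) and that $\aleph_0 N\cong nN$ is impossible for such a module; and both obtain $(3)\Rightarrow(2)$ by invoking Bergman--Dicks on the ordinary monoid $\langle x\rangle$ and using that a hereditary ring is PDFG to conclude $\V^{\aleph_0}(R_0)$ is again cyclic on $[R_0]$. Your rigidity paragraph (that a cyclic $\aleph_0$-monoid satisfying (3) is determined by its underlying monoid $\langle x\rangle$) simply makes explicit what the paper compresses into ``$\bar g$ is well-defined and an isomorphism''; the appeal to Corollary~\ref{useful} is not really needed, as the equality $\V^{\aleph_0}(R_0)=\langle\add([R_0])\rangle_{\aleph_0}$ is immediate from the definition of PDFG.
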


\begin{proof}\begin{proofenumerate}
 \item [$(1) \implies (3).$] 
Let $H := \langle x \rangle _{\aleph_0}$ be a cyclic $\aleph_0$-monoid, $R$ be a realization of $H$, and $f: H \to V^{\aleph_0}(R)$ be an isomorphism of $\aleph_0$-monoids.
There exists a right $R$-module $N$ such that $f(x) = [N]$. Consequently, every element of $V^{\aleph_0}(R)$ is equal to $[\alpha N]$, for some $\alpha \leq \aleph_0$, and therefore $N$ is a finitely generated module. If $\aleph_0 x = nx$ for some finite integer $n$, then $\aleph_0 N \cong nN$, which is not possible as $N$ is finitely generated. 

\item [$(3) \implies (2).$]  Suppose that (3) holds. The submonoid of $H$ generated by $x$, denoted by $\langle x \rangle$, is a reduced, commutative monoid with $x$ as an order-unit. 
Thus, by Bergman's realization, there is a hereditary ring $R$ such that $\langle x \rangle$ has a realization to
 $V(R)$. Let $g: \langle x \rangle \to V(R)$ be an isomorphism of monoids. Define $\bar{g}: H \to V^{\aleph_0}(R)$ such that $\bar{g}(\aleph_0 x): = \aleph_0 g(x)$ and $\bar{g}(n x): =  g(nx)$, for $n < \aleph_0$. Using the hypothesis (3), $\bar{g}$ is well-defined. Moreover, since $R$ is hereditary, all countably generated projective modules are direct sums of finitely generated ones. Thus, $\bar{g}: H \to V^{\aleph_0}(R)$ is an isomorphism.

\item [$(2) \implies (1)$]  is trivial.  
 \end{proofenumerate} 
\end{proof}

\textit{Due to the above lemma, from now on in this section, all $\aleph_0$-monoids (including $V^{\aleph_0}(R)$) are assumed to be non-cyclic and two-generated.}

Let us start with the following fact: 

\begin{lemma} \label{notpdfg}
 $H$ cannot be realized simultaneously to  both a PDFG and a non-PDFG ring.
\end{lemma}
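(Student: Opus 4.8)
The plan is to show something stronger: if $H$ admits \emph{some} realization to a PDFG ring, then \emph{every} realization of $H$ is to a PDFG ring. Since a ring either is or is not PDFG, this immediately yields the statement. So suppose $H=\langle\{x_1,x_2\}\rangle_{\aleph_0}$ is realizable both to a PDFG ring and to an arbitrary ring $R$, say via an $\aleph_0$-isomorphism $\psi\colon H\to V^{\aleph_0}(R)$; put $P_i:=\psi(x_i)$. By Corollary~\ref{useful}, realizability to a PDFG ring is the same as $H$ being hereditary, so $H$ is hereditary, and it suffices to derive that $R$ is PDFG, i.e. $V^{\aleph_0}(R)=\langle\add([R])\rangle_{\aleph_0}$.

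First I would analyze the form of $[R]$. As $H$ is non-cyclic, $P_1,P_2\neq 0$ and $V^{\aleph_0}(R)=\langle\{P_1,P_2\}\rangle_{\aleph_0}$; since $R$ is cyclic over itself, an infinite coefficient in a form of $[R]$ would produce an infinite direct sum of nonzero summands inside $R$, which is impossible, so $R\cong aP_1\oplus bP_2$ with $a,b$ finite. If $a\geq 1$ and $b\geq 1$, then $P_1,P_2$ are summands of $R$, hence finitely generated, so every countably generated projective $\alpha P_1\oplus\beta P_2$ is a direct sum of finitely generated projectives and $R$ is PDFG — this case uses nothing about hereditariness. Thus I may assume $a=0$ (the case $b=0$ is symmetric): then $R\cong bP_2$, so $\add([R])=\add([P_2])$, and $R$ is PDFG exactly when $x_1\in\langle\add(x_2)\rangle_{\aleph_0}$.

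Now I would invoke hereditariness. By Corollary~\ref{useful} there is a braiding element $x_0\in H$ with $\add(x_0)$ a generating set; arguing as above in the PDFG realization furnished by that corollary (where $x_0$ corresponds to the ring itself, hence to a finitely generated module), $x_0$ has a finite form $x_0=c_1x_1+c_2x_2$ with $c_1,c_2$ finite. If $c_1=0$, then $\add(x_0)=\add(x_2)$, hence $\langle\add(x_2)\rangle_{\aleph_0}=\langle\add(x_0)\rangle_{\aleph_0}=H$ and we are done. In the remaining case ($a=0$ and $c_1\geq 1$) set $M:=\psi(x_0)\cong c_1P_1\oplus c_2P_2$; then $[M]$ is a braiding element of $V^{\aleph_0}(R)$ with $\langle\add([M])\rangle_{\aleph_0}=V^{\aleph_0}(R)$, so $[R]\in\langle\add([M])\rangle_{\aleph_0}$. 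Writing $R$ as a countable direct sum of summands of finite powers of $M$ and using that $R$ is cyclic, all but finitely many summands vanish, so $R$ is a summand of some $M^N$, say $M^N\cong R\oplus T$. I then want to conclude that $M$ — equivalently $P_1$ — is finitely generated: once this is known, $\add([M])$ consists of finitely generated projectives, so $\add([M])\subseteq\add([R])$, whence $V^{\aleph_0}(R)=\langle\add([M])\rangle_{\aleph_0}\subseteq\langle\add([R])\rangle_{\aleph_0}$ and $R$ is PDFG.

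The hard part is precisely this last step — forcing the braiding element to correspond to a finitely generated module in the arbitrary realization. Because projective modules over a general ring need not cancel, $M^N\cong R\oplus T$ with $R$ finitely generated does not by itself make $M$ finitely generated, and an infinitely generated indecomposable projective is not excluded by naive manipulations. The resolution I envisage is to feed well-chosen families $(x_i),(y_i)\subseteq\add([M])$, adapted to the splitting $M^N\cong R\oplus T$ (with $R$ finitely generated), into the braiding-element property of $[M]$ from Theorem~\ref{directsumofinfinite}, and to combine the resulting braiding decompositions — whose intervals are \emph{finite} — with condition~\ref{hcc:last} of Theorem~\ref{hereditarycasecor}, which holds because $H$ is hereditary and prevents $x_1$ from having simultaneously a finite form and an infinite form; this should contradict $P_1$ being infinitely generated. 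The interplay between the finite-interval braiding decompositions and the ``no finite-and-infinite form'' constraint is the crux of the argument.
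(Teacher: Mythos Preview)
Your argument leaves a genuine gap at exactly the point you flag as ``the hard part.'' You reduce correctly to the situation $R\cong bP_2$ with $b$ finite, $c_1\ge 1$, and $M^N\cong R\oplus T$, and you want $M$ (equivalently $P_1$) to be finitely generated; but you then only \emph{propose} to feed unspecified families into the braiding property of $[M]$ and hope for a collision with condition~\ref{hcc:last} of Theorem~\ref{hereditarycasecor}. No families are written down, no braid is computed, and it is not clear why the output would produce a finite form for $x_1$. In fact the braiding detour is the wrong lever. A direct argument suffices once you are at $R\cong bP_2$: since $P_1$ is a summand of $R^{(\aleph_0)}\cong P_2^{(\aleph_0)}$ one has $x_1+q=\aleph_0x_2$ for some $q\in H$; writing $q$ in a form $\alpha X_1+\beta X_2$, condition~\ref{hcc:last} forces $\alpha=\aleph_0$ or $\beta=\aleph_0$, and then condition~\ref{hcc:first} (in the first case, after adding $\aleph_0 x_2$) or condition~\ref{hcc:twoinf} (in the second) gives $x_1\in\add(x_2)$, so $P_1$ is a summand of a finite power of the finitely generated $P_2$ --- a contradiction. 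Thus your overall strategy is salvageable, but not along the braiding-decomposition line you sketch, and the passage through $x_0$ and $M$ is unnecessary.

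By contrast, the paper's proof is a two-sentence transfer: choose finitely generated generators $[P_1],[P_2]$ of $V^{\aleph_0}(R)$ (possible because $R$ is PDFG and $H$ is non-cyclic), push them through the $\aleph_0$-isomorphism to generators $[P'_1],[P'_2]$ of $V^{\aleph_0}(S)$, and invoke \cite[Lemma~5.1]{me} to conclude that $P'_1,P'_2$ are again finitely generated, whence $S$ is PDFG. No braiding elements, no case split on the form of $[R]$, and no appeal to Theorem~\ref{hereditarycasecor} are needed.
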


\begin{proof}
We need to show that if $R$ is PDFG and $S$ is not PDFG, then $\V^{\aleph_0}(R)$ and $\V^{\aleph_0}(S)$ are not isomorphic. Let $f: \V^{\aleph_0}(R) \to \V^{\aleph_0}(S)$ be an $\aleph_0$-isomorphism, and let $P_1$ and $P_2$ be finitely generated projective modules such that $\V^{\aleph_0}(R) = \langle \{ [P_1], [P_2]\} \rangle_{\aleph_0}$. Let $P'_1$ and $P'_2$ be $S$-modules such that
 $f([P_1]) = [P'_1]$ and $f([P_2]) = [P'_2]$, respectively. Then $\V^{\aleph_0}(S) = \langle \{ [P'_1], [P'_2] \} \rangle_{\aleph_0}$ and $P'_1, P'_2$ are both finitely generated, as shown by \cite[Lemma 5.1]{me}.
 This implies that $S$ has to be a PDFG ring.
\end{proof}

If $P$ is a projective module, the \defit{trace} of $P$ is
\[
  \Tr(P) \coloneqq \sum_{f \in \Hom(P,R)} \im(f).
\]
Recall that $\Tr(P)$ is an idempotent ideal of $R$, and that $\Tr(P)$ is the least among all ideals $I$ such that $P=PI$, see for example \cite{PavelandPuninski} and \cite{whitehead}.

\begin{prop} \label{1st-case-lemma}
Suppose that $H$ is generated by $x_1$ and $x_2$ and has realization to a non-PDFG ring. 
Then  there exist $1 \leq  i \neq  j \leq 2$  such that:
\begin{enumerate} [itemsep=5pt]
    \item   $x_j \notin {\add}(x_i)$.
    \item $\aleph_0 x_i + x_j =\aleph_0 x_i$.

\item $x:= x_1 + x_2$ is not a braiding element. 

    \item  $x_i \in \add(x_j)$ if  and only if $\aleph_0 x_j = \aleph_0  x_i$.% if and only if $\beta_1 x_i + \beta_2 x_j = m x_j$ for some finite integer $m \geq 1$ and $\beta_1 \neq 0 $ nonzero.

    \item If $x_i \notin {\add}(x_j)$ and  $\alpha X_i + \beta X_j $ and $ \alpha' X_i + \beta' X_j$ are two forms of an element, then  $\alpha, \alpha'$ are both finite or infinite.
\end{enumerate}
\end{prop}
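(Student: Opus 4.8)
The plan is to transport everything to modules. Fix an isomorphism of $\aleph_0$-monoids $f\colon H\to \Vmon^{\aleph_0}(R)$ with $R$ not PDFG, and choose countably generated projective right $R$-modules $M_1,M_2$ with $f(x_k)=[M_k]$. Since $f$ respects countable sums, every $h\in H$ with form $\alpha X_1+\beta X_2$ satisfies $f(h)=[M_1^{(\alpha)}\oplus M_2^{(\beta)}]$ (write $P^{(\lambda)}$ for the direct sum of $\lambda$ copies, $0\le\lambda\le\aleph_0$), so two forms of the same element become an honest module isomorphism. Two tools will be used throughout: the elementary \emph{swindle cancellation} fact (if $A$ is isomorphic to a direct summand of $B$, $B$ to a direct summand of $A$, and $A\cong A^{(\aleph_0)}$, then $A\cong B$ --- one gets $A\cong A\oplus C$ for some $C$, hence $A\cong A^{(\aleph_0)}\cong(A\oplus C)^{(\aleph_0)}\cong A\oplus C^{(\aleph_0)}$, and then $B\cong A\oplus C^{(\aleph_0)}\cong A$); and the trace ideal recalled above, in the form $P\cdot\Tr(P)=P$, i.e. $P\otimes_R R/\Tr(P)=0$.

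First I would fix the indices. Writing $[R]=\alpha_1x_1+\alpha_2x_2$ gives $R\cong M_1^{(\alpha_1)}\oplus M_2^{(\alpha_2)}$; since $R\ne 0$ (otherwise $\Vmon^{\aleph_0}(R)=0$ is cyclic) is finitely generated, some $\alpha_k\ge 1$ and the corresponding $M_k$ is finitely generated. They cannot both be: otherwise $\Vmon^{\aleph_0}(R)=\langle[M_1],[M_2]\rangle_{\aleph_0}$ would make every countably generated projective isomorphic to some $M_1^{(a)}\oplus M_2^{(b)}$, hence a direct sum of finitely generated projectives, and by Kaplansky's theorem (every projective is a direct sum of countably generated ones) $R$ would be PDFG. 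So exactly one $M_k$ is finitely generated; relabel so $M_i$ is finitely generated and $M_j$ is not. Then $\alpha_j=0$ (else $M_j$ would be a direct summand of the finitely generated module $R$) and $\alpha_i\ge 1$, so $R\cong M_i^{(\alpha_i)}$; hence $M_i^{(\aleph_0)}\cong R^{(\aleph_0)}$, the element $\aleph_0x_i$ is the order unit of $H$, and $\add(x_i)$ is exactly the set of classes of finitely generated projectives. Item (1) is then immediate (a member of $\add(x_i)$ is the class of a direct summand of some $M_i^{(n)}$, hence finitely generated, while $M_j$ is not), and item (3) too: $x_1,x_2$ are both direct summands of $x_1+x_2$, so $\add(x_1+x_2)$ generates $H$, and were $x_1+x_2$ a braiding element, \ref{useful} would make $H$ hereditary, hence realizable over a PDFG ring, contradicting \ref{notpdfg}.

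Next I would settle (2) and (4) by swindling. Since $M_j$ is countably generated it is a direct summand of $R^{(\aleph_0)}$. For (2), apply swindle cancellation with $A=R^{(\aleph_0)}$ and $B=R^{(\aleph_0)}\oplus M_j$ (which is a direct summand of $R^{(\aleph_0)}\oplus R^{(\aleph_0)}\cong R^{(\aleph_0)}$): this gives $R^{(\aleph_0)}\oplus M_j\cong R^{(\aleph_0)}$, i.e. $\aleph_0x_i+x_j=\aleph_0x_i$. For (4)$(\Leftarrow)$: if $\aleph_0x_j=\aleph_0x_i$ then $M_j^{(\aleph_0)}\cong R^{(\aleph_0)}$, and $M_i\mid R\mid R^{(\aleph_0)}\cong M_j^{(\aleph_0)}$ with $M_i$ finitely generated forces $M_i\mid M_j^{(N)}$ for some finite $N$, so $x_i\in\add(x_j)$. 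For (4)$(\Rightarrow)$: if $M_i\mid M_j^{(m)}$ then $M_i^{(\aleph_0)}\mid M_j^{(\aleph_0)}$, while $M_j\mid R^{(\aleph_0)}\cong M_i^{(\aleph_0)}$ gives $M_j^{(\aleph_0)}\mid M_i^{(\aleph_0)}$; swindle cancellation (with $A=M_i^{(\aleph_0)}$) then gives $M_i^{(\aleph_0)}\cong M_j^{(\aleph_0)}$, i.e. $\aleph_0x_i=\aleph_0x_j$.

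The main obstacle is (5), where naive direct‑sum manipulations fail (a direct summand of a direct sum of finitely generated projectives need not be one), which is exactly why the trace ideal was recalled just before the statement. Assume $x_i\notin\add(x_j)$ and suppose some $y$ has a form $\alpha X_i+\beta X_j$ with $\alpha$ finite and a form $\alpha'X_i+\beta'X_j$ with $\alpha'$ infinite (so $\alpha'=\aleph_0$). From the second form and the swindle used for (2), $y=[R^{(\aleph_0)}\oplus M_j^{(\beta')}]=[R^{(\aleph_0)}]$; from the first form, $R^{(\aleph_0)}\cong M_i^{(\alpha)}\oplus M_j^{(\beta)}$ with $\alpha$ finite. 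Put $I:=\Tr(M_j)$, an idempotent two‑sided ideal with $M_j\otimes_R R/I=0$. Tensoring this isomorphism with $R/I$ over $R$ kills the $M_j^{(\beta)}$ summand and yields $(R/I)^{(\aleph_0)}\cong (M_i/M_iI)^{(\alpha)}$ as right $R/I$‑modules; the right side is finitely generated over $R/I$ ($\alpha$ finite, $M_i$ finitely generated over $R$), while a free module of rank $\aleph_0$ is finitely generated only over the zero ring, so $R/I=0$, i.e. $\Tr(M_j)=R$. But $\Tr(M_j)=R$ produces a split epimorphism $M_j^{(k)}\twoheadrightarrow R$ for some finite $k$, whence $M_i\mid R\mid M_j^{(k)}$ and $x_i\in\add(x_j)$, a contradiction. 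Thus no element of $H$ has both a finite‑$\alpha$ and an infinite‑$\alpha$ form, which is (5). The only steps needing genuine care are the Kaplansky argument above (that exactly one generator maps to a non‑finitely‑generated module) and this reduction modulo $\Tr(M_j)$; everything else is bookkeeping with Eilenberg swindles.
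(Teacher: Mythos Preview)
Your proof is correct and follows essentially the same route as the paper: identify the unique non-finitely-generated generator, use Eilenberg--swindle absorption for (2) and (4), appeal to \cref{useful} and \cref{notpdfg} for (3), and kill $M_j$ by tensoring with $R/\Tr(M_j)$ for (5). The only cosmetic differences are that you spell out the swindles where the paper cites \cite[Lemma~2.8 and Proposition~5.4]{me}, and in (5) you argue by contradiction (deducing $\Tr(M_j)=R$ and hence $x_i\in\add(x_j)$) whereas the paper first observes $\Tr(P_j)\ne R$ and then tensors the generic isomorphism directly.
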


\begin{proof}
Suppose that
$H$ has realization to $V^{\aleph_0} (R)$, where $R$ is a  non-PDFG ring and $f: H \to V^{\aleph_0} (R)$ is an $\aleph_0$-monoid isomorphism. 
Then $ \{ f(x_1), f(x_2) \}$ is a generating set 
for  $V^{\aleph_0} (R)$. 
So   $[R] =n  f(x_1)  +m f(x_2) $, for some  finite integers $n, m  \geq 0$. 
Since $R$ is not a PDFG, there exists a projective module $P$ such that $P$ is not direct sum of finitely generated ones.  On the other hand, $[P]$  must be
sum of  some (finite or infinite) copies of $ f(x_1) $ and  $f(x_2)$. 
Suppose that  $P_1$ and $P_2$ are projective $R$-modules that 
 $[P_1] =  f(x_1)$ and $[P_2] = f(x_2)$.
This implies that $P_1$ or $P_2$ can not be finitely generated. Suppose that $P_2$ is not finitely generated. We set $j: = 2$ and $i:= 1$. Then: 

\begin{enumerate}[itemsep=5pt]

 \item  We have   $[R] =n  f(x_i)   $
and consequently $f(x_i)$ is finitely generated and $f(x_j)$ is not in add$(f(x_i))$. Consequently $x_j \notin \text{add}(x_i)$.

    \item  $f(x_j)$ is class isomorphism of a pojective module  and  so $f(x_j) + y = \aleph_0 [R]$, for some $y \in \V^{\aleph_0} (R)$. Now from \cite[Lemma 2.8]{me}, 
we conclude that $f(x_j) + \aleph_0 [R] = \aleph_0 [R]$.  Replacing $[R]$  with 
$n f(x_i)$, we conclude that  $f(x_j) + \aleph_0 f(x_i) = \aleph_0 f(x_i)$
and so $x_j +   \aleph_0  x_i= \aleph_0  x_i $.

\item Since $H = \langle \{x_1, x_2\} \rangle_{\aleph_0}$, and $x_1, x_2 \in \mathrm{add}(x)$,  where $x:= x_1 + x_2$, it is clear that $H = \langle \mathrm{add}(x) \rangle_{\aleph_0}$. Then, using
  Lemmas \ref{useful} and \ref{notpdfg}, we conclude that $x$ is not a braiding element.

\item  Let $x_i \in \text{add}(x_j)$. Then $m x_j = x_i + y$, for some $y \in H$ and finite integer $m \geq 1$. 
Consequently,  $m f(x_j) = f(x_i) + f(y)$. Now,  following equivalence condition of (c) and (d) in \cite [Proposition 5.4] {me}, we conclude that 
$ \aleph_0 ( m f(x_j))$, which is equal to  $  \aleph_0  f(x_j)$,   represents the  free module $\aleph_0 R$. Therefore,  bearing in mind that $[R] = n f( x_i)$, we have  $\aleph_0  f(x_j) =  \aleph_0   [R] = \aleph_0 f(x_i )$.
Consequently $ \aleph_0  x_j =   \aleph_0  x_i$.  \\

Conversely, let $ \aleph_0  x_j =   \aleph_0  x_i$. Then we have $ \aleph_0  f( x_j) =   \aleph_0  f(x_i) $. 
 Therefore, we have an isomorphism  
$g: \oplus _{i \geq 1} P_j \to \oplus _{i \geq 1} P_i$. Since $P_i$ is finitely generated $R$-module, there are $y_1, \cdots y_k \in P_i$ such that $P_i = y_1 R + \cdots + y_k R$. For each  $y_i$, there exists $m_i \in \oplus _{i \geq 1} P_j  $ 
such that $g(m_i) = y_i$. Now each $m_i$ has finite support and we conclude that there exists an epimorphism from a  direct sum of finitely many copies of $P_j$, say $m$ copies, 
 to $P_i$.  Since 
 $P_i$ is projective, this epimorphism 
splits and we have that $m P_j \cong  P_i \oplus Y$, for some 
projective module $Y$.  That means  $m f(x_j)  =  f(x_i)  +  [Y]$  and so  $x_i \in \add(x_j)$. 
%Since $Y$ is a direct sum of $P_i$s and $P_j$s, we find $\beta_1, \beta_2$ presented in the proposition. 
    
    \item  
Suppose that $\alpha x_i+ \beta x_j = \alpha' x_i + \beta' x_j$, thus
 $\alpha P_i \oplus  \beta P_j \cong  \alpha' P_i   \oplus     \beta' P_j$. 
 Using Part (4), $\aleph_0 P_i \ncong \aleph_0 P_j$, and since $\aleph_0 P_i$ is free, we conclude that $\aleph_0 P_j$ is not free. Therefore, the trace of $P_j$ is not equal to $R$, see \cite[Proposition 5.4]{me}.
 Consequently, by tensoring $\alpha P_i \oplus  \beta P_j \cong  \alpha' P_i   \oplus     \beta' P_j$
 with $R/ I$ where $I = \Tr(P_j)$, we have that $\alpha (P_i / P_i I ) \cong \alpha' (P_i / P_i I )$. Since $P_i$
and consequently $P_i / P_i I $  is finitely generated module, we conclude that $\alpha$ and $\alpha'$ are both finite or both infinite.
\end{enumerate}
\end{proof}

\begin{defi}
    Let $H$ be a noncyclic $\aleph_0$-monoid generated by two elements. Then $H$ is classified into the following types:

    \begin{enumerate} [itemsep=5pt]
        \item {\rm Type (1)}: There exists a generating set $\{x_1, x_2\}$ such that $ x_1 \notin \add(x_2)$ and  $ x_2 \notin \add(x_1)$.

        \item {\rm Type (2)}: There exists a generating set $\{x_1, x_2\}$ such that $x_i \in \add(x_j)$ for 
some $i \neq j$, but $x_j  \notin \add(x_i)$.

        \item {\rm Type (3)}: There exists a generating set $\{x_1, x_2\}$ such that  $x_i \in \add(x_j)$ for every $1 \leq i, j \leq 2$. Equivalently 
        $\add(x_1) = \add(x_2) $. 
    \end{enumerate}
\end{defi}

\begin{teor} \label{typeseriuse}
Suppose that  $H$   is of {\rm Type (i)}, for $1 \leq i \leq 3$. 
If $H$ has realization and $1 \leq j\neq i \leq 3$,  then $H$  is not of {\rm Type (j)}. 
\end{teor}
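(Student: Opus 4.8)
The plan is to show that the three types are mutually exclusive once $H$ has a realization, by arguing that each type forces a structural invariant on $H$ (or on a fixed realization $R$) that is incompatible with the other two. The cleanest route is to separate into the PDFG and non-PDFG cases, using Lemma \ref{notpdfg} to know these two worlds never overlap.

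First I would handle the case where $H$ has a realization to a non-PDFG ring. Then Proposition \ref{1st-case-lemma} applies verbatim: there is a generating set and indices $i \neq j$ with $x_j \notin \add(x_i)$ (so $H$ is \emph{not} of Type (3) for this particular generating pair), and moreover $x_i \in \add(x_j)$ iff $\aleph_0 x_j = \aleph_0 x_i$ (Part (4)). The subtlety is that "type" is defined by the \emph{existence} of \emph{some} generating set with the stated property, so I must rule out that a different generating set exhibits a different type. Here I would invoke \cite[Lemma 5.1]{me} (used already in the proof of Lemma \ref{notpdfg}): any generating set of a two-generated $\aleph_0$-monoid with a realization consists of elements whose images are finitely generated projectives after passing through the isomorphism $f$, but more importantly any two-element generating set must, up to the partial order, recover the "order-unit-like" element. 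The key point to extract is that whether $\add(x_1)=\add(x_2)$, and whether exactly one containment holds, is a property intrinsic to the monoid and not to the choice of generators — because $\add(-)$ and the relation $\aleph_0 x_1 = \aleph_0 x_2$ are monoid-theoretic. So if $H$ were simultaneously of Type (2) and Type (3), then for the Type (3) generating set $\add(x_1')=\add(x_2')$, forcing $\aleph_0 x_1' = \aleph_0 x_2'$ and hence (by Corollary \ref{summeryit}, if a braiding element is available) strong constraints; and for the Type (2) generating set one containment fails. The contradiction comes from comparing the two: a Type (2) generating set yields, via Proposition \ref{1st-case-lemma}(4), an element $x_j$ with $\aleph_0 x_j \neq \aleph_0 x_i$ — an asymmetry that the Type (3) structure $\add(x_1')=\add(x_2')$ cannot accommodate once we track how the two generating sets express each other.

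Second, for the PDFG case I would use Corollary \ref{useful}: $H$ is hereditary, so there is a braiding element $x$ with $\add(x)$ generating, and we may assume $x = x_1 + x_2$ up to choosing generators. Now Theorem \ref{hereditarycasecor} gives precise conditions; in particular condition \ref{hcc:first} ($n x_i + \aleph_0 x_j = \aleph_0 x_i + \aleph_0 x_j$ with $n$ finite forces $x_i \in \add(x_j)$) together with Corollary \ref{summeryit} ($\aleph_0 z_1 = \aleph_0 z_2 \Rightarrow \add(z_1)=\add(z_2)$) pins down the relationship between $\add(x_1)$ and $\add(x_2)$ as an invariant. Concretely: if $H$ is of Type (3) then $\add(x_1')=\add(x_2')$, so $\aleph_0 x_1' = \aleph_0 x_2'$ need not hold but $x_1' \in \add(x_2')$ and vice versa; translating any other generating set into this one (each original generator is a finite or infinite combination of $x_1', x_2'$) shows the original generators also satisfy mutual add-containment, contradicting Types (1) and (2). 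The reverse translations handle the remaining overlaps symmetrically. Throughout, the workhorse lemma is that "$x_i \in \add(x_j)$" is detectable from the monoid and survives change of generating set, which is where I expect the main obstacle to lie.

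The hard part will be the bookkeeping of \emph{change of generating set}: type is an existential statement over generating sets, so ruling out "Type (i) and Type (j) simultaneously" genuinely requires showing that the add-containment pattern among a two-element generating set is independent of which such set one picks. I would isolate this as a preliminary claim — \emph{if $\{x_1,x_2\}$ and $\{y_1,y_2\}$ both generate the noncyclic two-generated $\aleph_0$-monoid $H$ with a realization, then (after possibly swapping indices) $x_1 \in \add(x_2) \iff y_1 \in \add(y_2)$ and $x_2 \in \add(x_1) \iff y_2 \in \add(y_1)$} — proved by expressing each $y_k$ as a combination $\alpha_k X_1 + \beta_k X_2$ and each $x_\ell$ back in terms of the $y$'s, and using that an order-unit-type element (the one finitely generating everything, detected via Proposition \ref{1st-case-lemma} in the non-PDFG case or via the braiding element in the PDFG case) is uniquely determined up to $\add$-equivalence. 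Once that claim is in hand, the theorem follows immediately: the type is then a well-defined function of $H$, and the three defining conditions are pairwise contradictory as stated.
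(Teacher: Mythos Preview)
Your plan is essentially the paper's approach, with the same key tools (Proposition~\ref{1st-case-lemma}, Corollary~\ref{summeryit}, Corollary~\ref{useful}) and the same core recognition that the whole difficulty is invariance of the $\add$-containment pattern under change of generating set. Two organizational differences are worth noting.

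First, the paper structures the argument by type-pair rather than by PDFG/non-PDFG at the top level: it shows (i) Type~(3) excludes Types~(1) and~(2), then (ii) Type~(2) excludes Type~(1), and only inside case~(ii) does it split on PDFG versus non-PDFG. In case~(i) the paper observes directly that a Type~(3) generating set forces $R$ to be PDFG (via Proposition~\ref{1st-case-lemma}(1)), so no separate non-PDFG branch is needed there; your top-level PDFG/non-PDFG split would have to rediscover this asymmetry. Second, the paper never invokes Theorem~\ref{hereditarycasecor} in this proof---that detour is unnecessary. The PDFG branch of case~(ii) is handled by pure form-bookkeeping: write each $x_i$ finitely in the $y$'s, use non-cyclicity to force certain coefficients nonzero, and chase the relation $n_1 x_2 = n_2 x_1 + n_3 x_2$ to a contradiction with $y_1 \notin \add(y_2)$, $y_2 \notin \add(y_1)$.

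One caution: your ``preliminary claim'' is not a reduction---it \emph{is} the theorem, restated. The paper does not try to prove a single uniform invariance lemma; it works out each type-pair concretely, and the arguments differ enough (Corollary~\ref{summeryit} is the engine in case~(i), while case~(ii) is direct coefficient analysis) that packaging them as one claim would not save work. Your sketch of how to prove that claim (``express each $y_k$ in the $x$'s and back, track the order-unit-like element'') is exactly what the paper does case-by-case, so you are on the right track, but be prepared for the argument to bifurcate along the paper's lines rather than yours.
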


\begin{proof}
It is enough to prove the following: 

\begin{enumerate}[itemsep=5pt]
\item [{\rm (i)}] If $H$ is of Type (3), then it cannot be of Type (1) or (2). 
 
 \item [{\rm (ii)}] If  $H$ is of Type  (2), then it can not be of Type (1). 
\end{enumerate} 

\begin{proofenumerate} [itemsep=5pt]
\item [{\rm Proof of (i):}] 
If $H$ is of Type (3), then there exists a generating set $\{x_1, x_2\}$ such that $\add(x_1) = \add(x_2)$. 
Then there is only one infinite form regarding $\{x_1, x_2\}$, see Part (2) of \cite[Corollary 5.5]{me}.  
Now let $\{y_1, y_2\}$ be another generating set. 
For each $1 \leq i \leq 2$, there exist $0 \leq \alpha_i, \beta_i \leq \aleph_0$ such that 
 $y_i = \alpha_i x_1 + \beta_i x_2$. 
Since $y_i$ is nonzero, for each $i$, either $\alpha_i$ or $\beta_i$ is nonzero. 
Therefore, $\aleph_0 y_i = \aleph_0 x_1$; in particular, this implies $\aleph_0 y_1 = \aleph_0 y_2$.\\
Now let $R$ be a realization of $H$. Since $\add(x_1) = \add(x_2)$, by Part (1) of Proposition \ref{1st-case-lemma}, $R$ is  a PDFG ring. So, 
by Corollary  \ref{useful}, there is a braiding element, say $x \in H$. 
There exist $0 \leq m_1, m_2 \leq \aleph_0$ such that
 $x = m_1 x_1 + m_2 x_2$. 
 Then since $x$ is nonzero, one of $m_1$ or $m_2$ is nonzero. 
So either $x_1$ or $x_2$ is in $\add(x)$. Since $\add(x_1) = \add(x_2)$, we have that $\add(x_1) = \add(x_2) \subseteq \add(x)$. \\
For each $1 \leq i \leq 2$, there exist $0 \leq \gamma_i, \delta_i \leq \aleph_0$ such that 
 $x_i = \gamma_i y_1 + \delta_i y_2$. 
If both $\gamma_1, \gamma_2$ are zero, then $H = < x_1, x_2>_{\aleph_0} \subseteq < y_2 >_{\aleph_0}$ and consequently, $H$ would be cyclic, which is a contradiction. 
Therefore, one of $\gamma_1$ or $\gamma_2$ is nonzero. That means $y_1 \in \add(x_1)$ or $y_1 \in \add(x_2)$. Thus, $y_1 \in \add(x)$. 
Similarly, at least one of $\beta_1$ or $\beta_2$ is nonzero, and so $y_2 \in \add(x)$.  So both $y_1, y_2 \in \add(x)$. \\
Finally, we are able to use Corollary \ref{summeryit} and from $\aleph_0 y_1 = \aleph_0 y_2$ and $y_1, y_2 \in \add(x)$, we conclude that $\add(y_1) = \add(y_2)$. 
\item [{\rm Proof of (ii):}] 
Suppose that $H$ is of Type (2) with the generating set $\{x_1, x_2\}$ such that $x_1 \in \add(x_2)$ and $x_2 \notin \add(x_1)$.
Let $H$ also be of Type (1). Then, there exists a generating set $\{y_1, y_2\}$ with $y_1 \notin \add(y_2)$ and $y_2 \notin \add(y_1)$.
Let $R$ be realization of $H$ and  $f: H \to \V^{\aleph_0} (R)$ be an $\aleph_0$-monoid isomorphism. There exist projective $R$-modules $M_1, M_2, N_1, N_2$
such that  $ [M_i] = f(x_i) $ and $  [N_i] = f(y_i) $, for every $1 \leq i \leq 2$.
   Then  $\{[M_1], [M_2]\}$ and  $\{[N_1], [N_2]\}$ are  generating sets for  $\V^{\aleph_0} (R)$. 
Moreover, since $x_1 \in \add(x_2)$, there exist finite $n_1 \neq 0$ and also  $1 \leq n_2 \leq \aleph_0$  and 
$ 0 \leq n_3 \leq \aleph_0$  such that $n_1 x_2 = n_2 x_1 + n_3 x_2$.
Now, let's consider two cases: whether $H$ has a realization to a PDFG or not.

Firstly, suppose that $H$ has a realization to a non-PDFG.
As $R$ is not a PDFG, either $M_1$ or $M_2$ is not finitely generated. Thus, $[R] = n [M_i]$ for some $1 \leq i \leq 2$ and finite $n \geq 1$, implying that $M_i$ must be finitely generated.
From 
 $n_1 x_2 = n_2 x_1 + n_3 x_2$, we have  $n_1 f(x_2) = n_2 f(x_1) + n_3 f(x_2)$. As $n_2$ is nonzero, if $M_1$ is not finitely generated, then $M_2$ is not finitely generated either. This is a contradiction because for $1 \leq i \leq 2$, $[R] = n [M_i]$ implies that $M_i$ must be finitely generated. This shows that $M_1$ is finitely generated, and $M_2$ is not. Therefore, $[R] = n [M_1]$. Consequently, for $x := f^{- 1} ([R])$,  we have $\add(x_1) = \add(x)$.

Since $R$ is not a PDFG ring,  one of $N_i$ is not finitely generated. Thus there exists $1 \leq i \leq 2 $ such that 
 $x = n' y_i$ for some finite $n'$. Without loss of generality, we suppose that $ x = n' y_1$. Thus, $\add(y_1) = \add(x) = \add(x_1)$.

There exist $0 \leq \alpha, \beta \leq \aleph_0$ such  that 
$y_2 = \alpha x_2 + \beta x_1$.  
 We claim that $\beta = 0$. Otherwise, $x_1 \in \add(y_2)$, and so $\add(x_1) \subseteq \add(y_2)$, which is a contradiction since $y_1 \in \add(x_1)$. 
 Consequently,
 $y_2 = \alpha x_2$.
  That is, $\add(x_2) \subseteq \add(y_2)$. Now, $y_1 \in \add(x_1) \subseteq \add(x_2)$ implies that $y_1 \in \add(y_2)$, which is a contradiction.  \\

Secondly, suppose that $H$ has a realization to a PDFG ring. In this case, all $M_i$ and $N_i$ are finitely generated.
Therefore, for each $1 \leq i \leq 2$, there exist finite numbers $\alpha_i$ and $\beta_i$ such that $x_i = \alpha_i y_1 + \beta_i y_2$.
If both $\alpha_1$ and $\alpha_2$ are zero, then $x_i = \beta_i y_2$. 
Given that $\{ x_1, x_2 \}$ is a generating set, this implies that $\{y_2\}$ is a generating set for 
 $H$. However, this contradicts the fact that $H$ is non-cyclic.
 Therefore, at least one of $\alpha_1$ and $\alpha_2$, and similarly one of $\beta_1$ and $\beta_2$, is nonzero.
From the equation $n_1 x_2 = n_2 x_1 + n_3 x_2$, we conclude that $n_3$ is nonzero; otherwise, $\add(x_1) = \add(x_2)$, leading to a contradiction.
Recall that $n_2 $  is non-zero and we have the equation:

\[ n_1 (\alpha_2 y_1 + \beta_2 y_2) = n_2 (\alpha_1 y_1 + \beta_1 y_2) + n_3 (\alpha_2 y_1 + \beta_2 y_2) \]

If $\beta_2 = 0$, then $\beta_1$ is nonzero, and from the above equation, we conclude that $y_2 \in \add(y_1)$, which is a contradiction. 
Consequently, $\beta_2 \neq 0$. Similarly, $\alpha_2 \neq 0$.

Thus, both $y_1$ and $y_2$ are in $\add(x_2)$. Since $H$ is not cyclic and cannot be generated solely by $x_1$, 
we conclude that $x_2$ appears in a form of either $y_1$ or $y_2$ regarding to 
generating  set $\{x_1, x_2\}$. Therefore, $\add(x_2) \subseteq \add(y_i)$ for either $i = 1$ or $i = 2$. In any case, this is a contradiction, 
as both $y_1$ and $y_2$ belong to $\add(x_2)$ and therefore lie in $\add(y_i)$.
\end{proofenumerate} 
\end{proof}

\section{Realization to hereditary VNR rings }

The aim of this section is understanding when $H$, which is a two generated
$\aleph_0$-monoid, 
  has realization to a VNR hereditary ring. 
{Let $M$ be a reduced commutative monoid.
We write $x \leq y$ whenever $y = x + z$ for some $z \in H$.
 An element $p \in M$ is called {\defit{prime}} if
 whenever $p \leq x+ y$, then $p \leq x$ or $p \leq y$. If every nonzero element of $M$ can be 
 written as a sum of prime elements, then $M$ is called 
 {\defit{primely generated}}.
  By \cite[Theorem 6.8]{22}, every finitely generated refinement monoid is primely generated.
  Recall that $M$  is {\defit{separative}} if the following cancellation of elements holds for every $a, b, c \in M$:
\begin{equation}  
a + c = b + c  {\text { and }} c \in \add(a) \cap \add(b)  \implies  a = b  \tag{Eq 4.1}  \label{Eq 4.1}
\end{equation}

Moreover, $c \in M$ is called \defit{cancelable} if for every $a, b \in M$:
\[
a + c = b + c \implies a = b
\]

When $H = \langle {x, y} \rangle_{\aleph_0}$ is an $\aleph_0$-monoid, by $H_{\text{f}}:= \langle x, y \rangle$, we refer to the submonoid generated by $x$ and $y$. In the next lemma, we examine properties of $H_{\text{f}}$ when it is a refinement monoid.

\begin{lemma} \label{clear}
Let $H = \langle {x, y} \rangle_{\aleph_0}$   such that   $H_{\text{f}} $ is refinement. Then:
\begin{enumerate}
\item Suppose that   $ n,   m, k, k' $ are finite integers such that 
$n > m$ and $n x + k y  = m x + k' y $, then $(n - m + 1) x + k y  =  x + k' y$. 
\item If $H$ is of {\rm Type (1)} regarding to $\{x, y\}$ and $ n_1 x + m_1 y   = n_2 x  +  m_2 y$, where all $n_i , m_i$  are nonzero finite integers. Then 
$n_1 x = n_2 x$ and $ m_1 y = m_2 y $. 
\item Suppose that 
\[ ny = mx + n'y, \]
where \( n \neq 0 \), \( m \neq 0 \), and \( n' \) are finite integers. Then \( x \) is cancelable. Moreover, if \( H \) is of {\rm Type (2)} regarding  to \(\{x, y\}\), then \( x \leq y \).

\end{enumerate}
\end{lemma}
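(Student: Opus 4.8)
All three parts are cancellation statements, so the plan is to feed everything through the fact that $H_{\mathrm f} = \langle x, y\rangle$ is a \emph{finitely generated} refinement monoid: by \cite[Theorem 6.8]{22} it is primely generated, and hence separative. I will use separativity in the form ``$a+c=b+c$ and $c\in\add(a)\cap\add(b)$ imply $a=b$'', together with the structure of primely generated refinement monoids (each prime is either free, yielding a homomorphism onto $\bN$, or idempotent, yielding one onto $\{0,1\}$ with $1+1=1$, and these homomorphisms separate points; in particular an element is cancelable iff every prime below it is free).

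\textbf{Part (1).} I would induct on $m$. If $m=1$ the asserted identity $(n-m+1)x+ky=x+k'y$ is literally the hypothesis, so assume $m\ge 2$ and rewrite the hypothesis as
\[
  x+\bigl((n-1)x+ky\bigr)\;=\;x+\bigl((m-1)x+k'y\bigr).
\]
Since $n-1\ge m-1\ge 1$, the element $x$ is a direct summand of each of $(n-1)x+ky$ and $(m-1)x+k'y$, hence lies in the $\add$ of each; separativity then yields $(n-1)x+ky=(m-1)x+k'y$. As $n-1>m-1\ge1$, the inductive hypothesis applies to this shorter relation and gives $(n-m+1)x+ky=x+k'y$, as wanted.

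\textbf{Part (2).} By interchanging the roles of $x$ and $y$ it suffices to prove $n_1x=n_2x$, and we may assume $n_1\ge n_2$; there is nothing to prove if $n_1=n_2$, so suppose $n_1>n_2$ and use Part (1) to rewrite the relation as $(n_1-n_2+1)x+m_1y=x+m_2y$. The plan is to test with the separating homomorphisms. Write each of $x,y$ as a sum of primes. Since $H$ is of Type (1), $x\notin\add(y)$, so some prime $p\le x$ is not $\le y$; if $p$ is free, the associated homomorphism $H_{\mathrm f}\to\bN$ kills the $y$'s and turns the relation into $n_1\cdot(\text{positive})=n_2\cdot(\text{positive})$, forcing $n_1=n_2$, contradiction. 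So every prime below $x$ but not below $y$ is idempotent; dually (from $y\notin\add(x)$) one finds a prime below $y$ not below $x$, and if it is free its homomorphism forces $m_1=m_2$, after which a free prime below $x$ (now necessarily below $y$ as well) forces $n_1=n_2$. In the remaining case — all these ``separating'' primes idempotent — the refinement hypothesis on $H_{\mathrm f}$ must be used genuinely: applying Riesz refinement to the original relation and refining until every entry is a prime or $0$, one shows no nontrivial relation $n_1x+m_1y=n_2x+m_2y$ survives. The identity $m_1y=m_2y$ follows by the same argument with $x$ and $y$ interchanged.

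\textbf{Part (3) and the obstacle.} From $ny=mx+n'y$ with $m\ge1$ we get $x\le mx\le ny$, so $x\in\add(y)$; in particular, if $H$ is of Type (2) with respect to $\{x,y\}$, then $x\in\add(y)$ while $y\notin\add(x)$. To prove $x$ is cancelable it suffices to rule out an idempotent prime $p\le x$: if such $p$ existed then $p\le ny$ forces $p\le y$, and feeding $x=p+x'$, $y=p+y'$ back into the relation (after using Part (1) to bring it to a normal form $Ny=y+m'x$ with $N\ge2$ — here one uses that the relation is non-degenerate) produces a relation among $y$, $x$ and their ``$p$-parts'' that is incompatible with refinement of $H_{\mathrm f}$. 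For $x\le y$, pick $n$ minimal with $x\le ny$; if $n\ge2$, refining $ny=y+(n-1)y=x+z$ writes $x=a+c$ with $a\le y$ and $c\in\add(x)\cap\add(y)$, and one absorbs $c$ into $y$ (using $\add(x)\subseteq\add(y)$ and the relation $ny=mx+n'y$), contradicting minimality; hence $n=1$. The main difficulty is the same in (2) and (3): one must cancel multiples of $x$ \emph{across} a $y$-part with $x\notin\add(y)$, which is exactly where separativity gives out, so the argument has to descend to the prime level and exploit that $H_{\mathrm f}$ itself — not just $H$ — is a refinement monoid, invoking the structure theory of primely generated refinement monoids.
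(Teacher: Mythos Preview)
Part (1) is correct and matches the paper: separativity plus the obvious induction on $m$.

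Part (2) has a real gap. Your case split on free versus idempotent primes is aimed at proving $n_1=n_2$, which is strictly stronger than the claim $n_1x=n_2x$; for instance, if $x$ is an idempotent prime and $y$ an incomparable idempotent prime, then $H_{\mathrm f}$ is a Type~(1) refinement monoid in which $2x+y=x+y$ with $n_1\ne n_2$ but $n_1x=n_2x$. So the ``remaining case'' you defer is not a corner case but the generic one, and ``one shows no nontrivial relation survives'' is not an argument. The paper bypasses the prime machinery entirely with a single refinement step: writing
\[
\begin{array}{c|cc}
 & n_2x & m_2y \\ \hline
n_1x & x' & y' \\
m_1y & z' & t'
\end{array}
\]
one has $y',z'\in\add(x)\cap\add(y)$. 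The key observation, which you never isolate, is that in Type~(1) this intersection is $\{0\}$: any nonzero element of $\add(x)$ has a form $ax$ with $a\ge1$ (a $y$ in the form would force $y\in\add(x)$), so if it also lay in $\add(y)$ we would get $x\in\add(y)$. Hence $y'=z'=0$ and $n_1x=x'=n_2x$, $m_1y=t'=m_2y$ drop out immediately.

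For Part (3) the paper does not argue via idempotent primes. Cancelability of $x$ is read off directly from separativity together with $x\in\add(y)$: every nonzero element of $H_{\mathrm f}=\langle x,y\rangle$ has either $x$ or $y$ as a summand and hence has $x$ in its $\add$, so the separativity hypothesis $x\in\add(a)\cap\add(b)$ is automatic for nonzero $a,b$. Your line ``produces a relation \ldots incompatible with refinement'' carries no content. For the Type~(2) conclusion $x\le y$, your minimality argument refines the wrong equation. The paper takes $n$ minimal with $ny=mx+n'y$ for some $m,n'\ge1$ (not $n$ minimal with $x\le ny$), refines $(n-1)y+y=mx+n'y$, and uses $y\notin\add(x)$ together with minimality to force the entries of the refinement table to produce a relation of the same shape with a strictly smaller left-hand exponent---a contradiction unless $n=1$, whence $x\le mx\le y$. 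Your step ``one absorbs $c$ into $y$'' is unjustified precisely because in Type~(2) the intersection $\add(x)\cap\add(y)$ need not be $\{0\}$.
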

\begin{proof}
\begin{enumerate}
\item 
By [22, Theorem 4.5], every primely generated refinement monoid is separative. In
particular every finitely generated refinement monoid is separative. This shows easily that (1) 
holds due to \ref{Eq 4.1}. 

\item Suppose that $H$ is of Type (1) and 
$ n_1 x + m_1 y   = n_2 x  +  m_2 y$, where all $n_i , m_i$  are nonzero finite integers. 
Since $H_{\text{f}}$ is refinement, this 
implies that $n_1 x = x' + y'$, $m_1 y = z' + t'$, $n_2 x = x' + z'$, and $m_2 y = y' + t'$, for $x', y', z', t' \in H_{\text{f}}$. 
This situation can be represented in the form of a square:

\begin{center}
\medskip
\begin{tabular}{|c||c|c|c|}\hline
& $n_2 x$  & $m_2 y$ \\\hline\hline
$n_1 x$ & $x'$ & $y'$ \\\hline
$m_1 y$ & $z'$ & $t'$ \\\hline
\end{tabular}
\medskip
\end{center}

Each $x', y', z', t'$ has a form regarding to $\{x, y\}$, and since $H$ is of Type (1), there exist finite integers $k, k'$ such that $x' = kx$ and $t' = k'y$. We present the following table:

\begin{center}
\medskip
\begin{tabular}{|c||c|c|c|}\hline
& $n_2 x$  & $m_2 y$ \\\hline\hline
$n_1 x$ & $kx$ & $0$ \\\hline
$m_1 y$ & $0$ & $k' y$ \\\hline
\end{tabular}
\medskip
\end{center}

This implies that $n_1 x = kx = n_2 x $ and $ m_1 y = k' y = m_2 y$. 

\item Suppose that $ ny = mx + n'y,$ where  $0 \neq m,  0 \neq n, n'$ are finite integer.  Thus 
$x \in \operatorname{add}(y)$. 
As we mentioned in Part (1), $H_{\text{f}}$ is separative, so when $x \in \operatorname{add}(y)$, it has to be cancelable. Now suppose that $H$ is of Type (2) regarding to $\{x, y\}$. 
  Since  $\add(x) \neq \add(y)$, $n'$ must be nonzero. 
Therefore, we can suppose that $n$ is the smallest possible nonzero integer such that $ny = mx + n'y$, for nonzero finite integers $m, n, n'$. We claim $n = 1$. If not,
 we write $(n - 1)y + y = mx + n'y$. Then, since $H_{\text{f}}$ is a refinement monoid, there exist $x', y', z', t'$ such that

\begin{center}
\medskip
\begin{tabular}{|c||c|c|c|}\hline
& $m x $  & $n' y $ \\\hline\hline
$(n - 1) y$ & $x'$ & $y'$ \\\hline
$ y$ & $z'$ & $t'$ \\\hline
\end{tabular}
\medskip
\end{center} 

That is $mx = x' + z'$, $n'y = y' + t'$, $(n - 1) y = x' + y'$, and $y = z' + t'$.
We claim that $x' = 0$.  
Since $y \notin \operatorname{add}(x)$ and $mx = x' + z'$, if $x'$ is nonzero, then in  a  form of $x'$ regarding to $\{x, y\}$ only $x$ can appear. 
But then, due to the minimality of $n$, and since $(n - 1) y = x' + y'$ and $\operatorname{add}(x) \neq \operatorname{add}(y)$, in a form of $y'$,  $y$ should appear. 
Then due to the minimality of $n$, we get to a contradiction.
This proves the claim and so  $x' = 0$. 
 But then $z'$ has to be nonzero and is equal to $kx$, for some finite $k \geq 1$. 
Now, again, the fact that $\operatorname{add}(x) \neq \operatorname{add}(y)$ implies that $t'$ is nonzero. However, this is in contradiction with the minimality of $n$. 
That means $n = 1$.  
\end{enumerate}
\end{proof}}

A strong source of examples of hereditary  VNR rings with interesting properties can be found in rings bases on  weighted graphs and separated graphs.
 In the next results, we will understand when a two-generated $\aleph_0$-monoid can be realized to 
  algebras based on these graphs.
 Let us recall the definitions.

\begin{defi}\label{weighted} {\rm \cite[Definition 5.1]{Roozbeh} }
A {\it weighted graph} $E=(E^0,E^{\st}, E^1,r,s,w)$ consists of three countable sets, $E^0$ called {\it vertices}, $E^{\st}$ {\it structured edges} and $E^1$ {\it edges} and
 maps $s,r:E^{\st}\rightarrow E^0$,  and  a {\it weight map} $w:E^{\st} \rightarrow \mathbb N$ such that $E^1= \coprod_{\alpha \in E^{\st}} \{\alpha_i \mid 1\leq i \leq w(\alpha)\}$, i.e., 
 for any $\alpha\in E^{\st}$, with $w(\alpha)=k$, there are $k$ distinct elements $\{a_1,\dots,\alpha_k\}$, and $E^1$ is the disjoint union of  all such sets for all $\alpha \in E^{\st}$. 
\end{defi}

We refer to \cite[Definition 5.2]{Roozbeh} for weighted Leavitt path $K$-algebra, where $K$ is a field, 
 constructed by the above graph. This algebra is denoted by $\LL_K(E,w)$.
Before recalling the main theorem from \cite{Roozbeh}, let us recall that it is always supposed that the graph is row finite, that means for every vertex $v, $
$s^{-1} (v) $ is a finite set. Moreover, $k_v$ denotes the number of different weights in $s^{-1}(v)$.

\begin{teor}  {\rm (\cite[Remark 12]{Pif} and \cite [Theorem 5.12]{Roozbeh})} \label{kspcts} Let $K$ be a field and $E$ be a  weighted graph and $k_{v} \leq 1$, for every vertex $v$. 
 Suppose that $M_E$ is  the abelian monoid generated by $\{v \mid v \in E^0 \}$ subject to the relations 
\begin{equation}\label{phgqcu}
n_vv=\sum_{\{\alpha\in E^{\st} \mid s(\alpha)=v \}} r(\alpha),
\end{equation}
for every $v\in E^0$ that emits edges, where $n_v= \max\{w(\alpha)\mid \alpha\in E^{\st}, \, s( \alpha)=v\}$. Then there is a natural monoid isomorphism $\V(\LL_K(E,w))\cong M_E$. Furthermore, if $E$ is finite, then $\LL_K(E,w)$ is hereditary. 
\end{teor}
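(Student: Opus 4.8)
The plan is to obtain the statement by assembling the two computations cited after it, splitting the argument into the monoid isomorphism and, separately, the hereditariness when $E$ is finite. Neither part needs a genuinely new idea: the whole point is that the hypothesis $k_v\le 1$ reduces the weighted situation to one that behaves like an ordinary Leavitt path algebra.

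For the monoid isomorphism $\V(\LL_K(E,w))\cong M_E$, the first observation is that $k_v\le 1$ makes the defining relations of $\LL_K(E,w)$ (see \cite[Definition 5.2]{Roozbeh}) transparent: at every vertex $v$ emitting structured edges, all $\alpha\in E^{\st}$ with $s(\alpha)=v$ carry one and the same weight, namely $n_v$, so the Cuntz--Krieger type relations say precisely that the right ideal $v\LL_K(E,w)$ generated by the vertex idempotent $v$ satisfies
\[
  \bigl(v\LL_K(E,w)\bigr)^{\,n_v}\;\cong\;\bigoplus_{\{\alpha\in E^{\st}\,:\,s(\alpha)=v\}} r(\alpha)\LL_K(E,w)
\]
as finitely generated projective right modules. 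One then runs the Ara--Moreno--Pardo argument for Leavitt path algebras essentially verbatim: the classes $[v\LL_K(E,w)]$ generate $\V(\LL_K(E,w))$, every relation among them is a consequence of the vertex relations above, and hence $v\mapsto [v\LL_K(E,w)]$ induces an isomorphism of the monoid $M_E$ presented by the generators $\{v\}$ and the relations \eqref{phgqcu} onto $\V(\LL_K(E,w))$. This is exactly \cite[Theorem 5.12]{Roozbeh}; the role of $k_v\le 1$ is that it guarantees a single multiplicity $n_v$ at each vertex, so that the presentation of the monoid has the displayed shape.

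For the second assertion, assume $E$ finite; I would show that $\LL_K(E,w)$ is (two-sided) hereditary. The most economical route is to realize $\LL_K(E,w)$ as a universal localization of the path algebra of a finite quiver built from $E$ (vertex set $E^0$, arrow set $E^1$): this path algebra is hereditary, being the path algebra of a finite quiver, and $\LL_K(E,w)$ is obtained from it by universally inverting the finitely many matrices that encode the relations \eqref{phgqcu}, so by Schofield's theorem that universal localization preserves hereditariness, $\LL_K(E,w)$ is hereditary. This, or an equivalent direct argument, is the content of \cite[Remark 12]{Pif}. Alternatively one can argue homologically: finiteness of $E$ yields only finitely many vertex relations, each a split short exact sequence of finitely generated projectives, and the usual telescoping/Schanuel argument for Leavitt path algebras then bounds the projective dimension of every finitely presented module --- in particular every simple module --- by $1$.

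The only genuinely delicate point in either route is the bookkeeping of the weight multiplicities $n_v$: unlike the ordinary Leavitt path algebra, where $[v]$ equals $\sum_{s(e)=v}[r(e)]$ on the nose, here $[v]$ enters only through $n_v[v]=\sum_{s(\alpha)=v}[r(\alpha)]$, so both the monoid presentation and the projective resolutions must carry this factor correctly, and the hypothesis $k_v\le 1$ is precisely what keeps exactly one such factor at each vertex. Everything else is a transcription of the Leavitt path algebra arguments, so I would not reprove those in detail but cite \cite{Roozbeh} and \cite{Pif} for the exact statements.
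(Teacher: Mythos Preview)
The paper does not supply a proof of this theorem at all: it is stated as a quotation of \cite[Remark 12]{Pif} and \cite[Theorem 5.12]{Roozbeh} and used as a black box, with no argument given in the text. Your proposal is therefore not comparable to a ``paper's own proof'' --- there is none --- but your concluding sentence, that one should simply cite \cite{Roozbeh} and \cite{Pif} rather than reprove the statements, is exactly what the paper does.

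The sketch you give of what those references contain is reasonable and broadly accurate (the $k_v\le 1$ hypothesis indeed collapses the weighted relations to a single multiplicity $n_v$ per vertex, and the hereditariness comes via a Bergman/universal-localization style argument). If anything, your write-up is more informative than the paper's treatment, since the paper offers no indication of why the cited results hold. There is no gap to flag: for the purposes of this paper, invoking the two references is the entire proof.
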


%Suppose that $H$ is an $\aleph_0$-monoid, and let $x, y \in H$. {\color{red} We can talk about type of  $y$ regarding to $x$ and  write $\Ty(x, y) = (n, k)$, where $(n, k) \in \aleph_0 \times \aleph_0$ whenever $k$ is the least integer such that $ky \leq nx$. }

When $E$ is a graph, we represent the number of edges from vertex $u$ to $v$ as $d(u, v)$.
Let $E$ be a weighted graph with two vertices, $u$ and $v$: 

\[
\xymatrix  @=150pt  { 
  u \ar@/^3.5pc/@{}[r]^{{\alpha}_{d(u, v)}} |{\SelectTips{cm}{}\object@{>}}  \ar@/^2.4pc /@{.>}[r] |{\SelectTips{cm}{}\object@{>}} 
  \ar@/^1.0 pc/[r]^{\alpha_{1}} |{\SelectTips{cm}{}\object@{>}} \ar@{.}@(l,d) \ar@(ur,dr)^{\gamma_{1}} \ar@(r,d)^{\gamma_{2}} \ar@(dr,dl)^{\gamma_{3}} 
\ar@(l,u)^{\gamma_{d(u, u)}}
        &
v   \ar@/^1.0 pc/[l]^{\beta_1}|{\SelectTips{cm}{}\object@{>}}   \ar@/^2.5pc /@{.>}[l] |{\SelectTips{cm}{}\object@{>}}  \ar@/^3.5pc/[l]^{\beta_{d(v, u)}} |{\SelectTips{cm}{}\object@{>}}
\ar@{.}@(l,d) \ar@(ur,dr)^{\tau_{1}} \ar@(r,d)^{\tau_{2}} \ar@(dr,dl)^{\tau_{3}} 
 \ar@(l,u) ^(0.6) {\tau_{d(v, v)}}  }   
 \label{graph} \tag{Figure 1}
\]

\begin{remark} \label{MHowisME}
$M_E$ is the monoid generated by $u, v$ and with the following relations:

\begin{enumerate} [itemsep=5pt]
  \item $n_{u} u = d{(u, u)} u + d{(u, v)} v$

  \item $n_{v} u = d{(v, v)} u + d{(v, u)} v$
\end{enumerate}
\end{remark}

Where,  $n_u = \max \{ w(a) \vert a \in \{ \alpha _i  \vert  1 \leq i \leq d(u, v)  \} 
\cup \{ \gamma_i \vert  1 \leq i \leq d(u, u)  \} $ and 
 $n_v = \max \{ w(a) \vert a \in \{ \beta _i  \vert  1 \leq i \leq d(v, u)  \} 
\cup \{ \tau_i \vert  1 \leq i \leq d(v, v)  \} $ \\

Recall that $(E,w)$  is called \defit{acyclic} if there is no cycle in $(E,w)$. 
An edge in $(E,w)$  is \defit{unweighted} if its weight is $1$ and  \defit{weighted} otherwise.
$E^1_w$ denotes the set of all weighted edges in $(E,w)$.
When $(E,w)$ is weighted, we mean always that 
 $E^1_w$ is nonempty.

{

\begin{teor}
The following statements for $H$ are equivalent:
\begin{enumerate} [itemsep=5pt]
    \item $H$ has a realization to a regular weighted Leavitt path algebra with two vertices.
    \item $H$ is hereditary, and there exists a generating set $\{x_1, x_2\}$ and 
$1 \leq i \neq j \leq 2$ such that  $x_i = nx_j$, for some finite 
$n \geq 2$. Moreover, all other relations are derived from $x_i = nx_j$. 
 
\end{enumerate}
In this case, for every generating set $\{y_1, y_2\}$, there is only one infinite form and $H$ is of {\rm Type (3)}.  

\end{teor}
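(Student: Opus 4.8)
The plan is to collapse both conditions to one rigid description of the weighted graph via Theorem~\ref{kspcts}, and then to read off the two realizations together with the ``in this case'' clause from the resulting presentation of $H$. Throughout I write $R=\LL_K(E,w)$ for the algebra in play, $E$ a weighted graph on a vertex set $\{u,v\}$ with $k_u,k_v\le1$, and $[P_u],[P_v]$ for the classes of the two vertex projectives, so that $\V(R)\cong M_E$ by Theorem~\ref{kspcts}.

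\emph{$(1)\Rightarrow(2)$.} I would first note that $E$ is finite, hence $R$ is hereditary by Theorem~\ref{kspcts} and so $H$ is hereditary. Next I would invoke the known criterion for von Neumann regularity of weighted Leavitt path algebras: regularity forces $(E,w)$ to be acyclic and, in combination with $k_u,k_v\le1$, forces every vertex to emit at most one structured edge. On a two-vertex acyclic weighted graph carrying a genuine weight this leaves, after possibly interchanging $u$ and $v$, precisely the picture in which $u$ emits a single structured edge $\alpha$ with $r(\alpha)=v$, $w(\alpha)=n\ge2$, and $v$ is a sink; then $M_E=\langle u,v\mid nu=v\rangle$ by Theorem~\ref{kspcts}. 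Transporting the $\aleph_0$-isomorphism $f\colon H\to\V^{\aleph_0}(R)$ and setting $x_1:=f^{-1}([P_u])$, $x_2:=f^{-1}([P_v])$, one gets a two-element generating set of $H$ with $x_2=nx_1$, $n\ge2$. Finally, since $M_E$ is presented by the single relation $nu=v$ and $R$ is hereditary — so $\V^{\aleph_0}(R)$ is just the $\aleph_0$-closure of $M_E$, formed by adjoining the one new top element $\aleph_0u$ — every relation of $H$ is a consequence of $x_2=nx_1$, which is $(2)$.

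\emph{$(2)\Rightarrow(1)$.} Here I would run the same picture in reverse. From $(2)$, the relation $x_2=nx_1$ shows $H=\langle x_1\rangle_{\aleph_0}$ is cyclic, and the clause on relations together with hereditariness and Lemma~\ref{again} shows $H$ is the free cyclic $\aleph_0$-monoid on $x_1$ (i.e.\ $\langle x_1\rangle$ is free and $\aleph_0x_1$ is a genuinely new element). Take $E$ to be the two-vertex weighted graph with a single structured edge $\alpha$, $s(\alpha)=u$, $r(\alpha)=v$, $w(\alpha)=n$; it is finite and acyclic with $k_u=1$, $k_v=0$, so $R=\LL_K(E,w)$ is von Neumann regular and hereditary, and $\V(R)\cong M_E=\langle u,v\mid nu=v\rangle$. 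Since $R$ is hereditary, $\V^{\aleph_0}(R)$ is the $\aleph_0$-closure of $M_E$, i.e.\ the free cyclic $\aleph_0$-monoid on $[P_u]$. Hence $H\cong\V^{\aleph_0}(R)$ via $x_1\mapsto[P_u]$, $x_2\mapsto[P_v]$, giving $(1)$.

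\emph{The ``in this case'' clause and the main difficulty.} In either direction $H$ turns out cyclic, say $H=\langle x\rangle_{\aleph_0}$. For any generating set $\{y_1,y_2\}$, each $y_k=\alpha_kx$ with $1\le\alpha_k\le\aleph_0$, so $\add(y_k)=\add(x)$ and thus $\add(y_1)=\add(y_2)$: $H$ is of {\rm Type (3)} (one may also quote \cite[Corollary~5.5]{me} at this point). Since $\aleph_0x$ absorbs every element of $H$ and every infinite form represents an element $\ge\aleph_0x$, the element $\aleph_0x$ is the only one of $H$ admitting an infinite form, so there is only one infinite form. I expect the real obstacle of the whole proof to be the step in $(1)\Rightarrow(2)$ where von Neumann regularity is used to reduce the two-vertex weighted graph to the single-weighted-edge picture; once that structural fact is available, everything else is routine manipulation with the presentation $\langle u,v\mid nu=v\rangle$ and Lemma~\ref{again}.
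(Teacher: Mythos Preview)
Your approach is essentially the paper's: use the Preusser regularity criterion to pin the two-vertex weighted graph down to a single weighted edge, identify $M_E$ with a one-relator monoid via Theorem~\ref{kspcts}, and in the converse direction build exactly that graph and check regularity. One gap to flag in $(1)\Rightarrow(2)$: you take $k_u,k_v\le1$ as part of the setup and use it (together with (LPA1)) to exclude additional structured edges, but this hypothesis is not part of the theorem statement and does not follow directly from regularity; with only (LPA1) one could still have an unweighted edge parallel to the weighted one. The paper instead invokes condition (W2) from the well-behavedness criterion: such a parallel pair $(\beta_i,\beta_l)$ with $\beta_i$ weighted and $\beta_l$ unweighted would violate (W2). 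Once the graph is reduced to a single edge, $k_u,k_v\le1$ holds automatically and Theorem~\ref{kspcts} becomes applicable.

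A smaller point in the ``in this case'' clause: your claim that $\add(y_k)=\add(x)$ for every generator $y_k$ fails when $y_k=\aleph_0 x$, since then $\add(y_k)=H\supsetneq\add(x)$ in the free cyclic $\aleph_0$-monoid. This does not hurt the conclusion, because Type~(3) is an existential condition on generating sets and the original $\{x_1,x_2\}$ already witnesses it; your argument for the one-infinite-form claim is unaffected. The paper handles this portion differently, invoking Theorem~\ref{typeseriuse} and \cite[Corollary~5.5(2)]{me} rather than arguing directly from cyclicity.
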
}
\begin{proof}
$(1) \implies (2)$.  Let $H \cong_{\aleph_0} \V ^{ \aleph_0}(\LL_K(E,w))$, where $\LL_K(E,w)$ is regular weighted Leavitt path algebra. 
Then, using \cite [Theorem  5.1]{me}, there is an isomorphism of monoids, $f: \V(\LL_K(E,w)) \to \add(x)$, where $x$ is a
 braiding element in $H$. 
Now we claim that $E$ has two vertices $u, v$ such that 
 $d(u, u) = 0 = d(v, v)$ and one of the following holds:
\begin{enumerate} [itemsep=5pt]
    \item [(a)] $d(u, v) = 0$ and  $d(v, u) = 1$.
    \item [(b)] $d(v, u) = 0$ and  $d(u, v)  = 1$.
\end{enumerate}

Before proving the claim, recall that if $u, v \in E^0$ and there is a path $p$ such that $s(p) = u$ and $r(p) = v$, then we write $u \geq v$. 
If $u\in E^0$, then $T(u):=\{v\in E^0 \mid u\geq v\}$ is called the \defit{tree} of $u$. If $X\subseteq E^0$, we define $T(X):=\bigcup_{v\in X}T(v)$. 
Recall that $(E,w)$ is called \defit{well-behaved} if Conditions (LPA1), (LPA2), (LPA3), (W1), and (W2) from \cite{RAIMUNDPREUSSER},  as explained in the following, hold.
Following \cite[Theorem 6.3.2]{RAIMUNDPREUSSER}, $\LL_K(E,w)$ is VNR if and only if $(E,w)$ is acyclic and well-behaved.

Proof of claim: \\
Since  $\LL_K(E,w)$ is VNR,  it is acyclic, so $d(u, u) = 0 = d(v, v)$, and either $d(v, u) = 0$ or $d(u, v) = 0$. 

As we mentioned, $(E,w)$ satisfies (LPA1) and  (W2),  that is:

\begin{enumerate}
    \item [(LPA1)]: {\it Any vertex $x \in E_0$ emits at most one weighted edge.}
   
    \item [(W2)]: {\it There is no $n\geq 1$ and paths $p_1,\dots,p_n,q_1,\dots,q_n$ in $(E,w)$ such 
    that $r(p_i)=r(q_i)~(1\leq i\leq n)$, $s(p_1)=s(q_n)$, $s(p_i)=s(q_{i-1})~(2\leq i \leq n)$ and for any $1\leq i \leq n$, the
     first letter of $p_i$ is a weighted edge, the first letter of $q_i$ is an unweighted edge, and the last letters of $p_i$ and $q_i$ are distinct.}
    \smallskip
    \end{enumerate}

  If $d(u, v) = 0$, then there is no ${\alpha}_i$, and following (LPA1), there is at most one $1 \leq i \leq d(v, u)$ such that $\beta_i$ is weighted. That means   $E^1_w$ has exactly  one element. 
  Then since there is no loop, the only path that starts with a weighted edge is $\beta_i$.  
If   $\beta_l$ is another edge,  (which is unweighted), then  we have two 
  paths $\beta_i$ and $\beta_l$ contradicting (W2).
 That means (a) should hold.
 Similarly, if $d(v, u) = 0$, then (b) holds.
So we have proved the claim.  
%Now  suppose that (a)  holds. 
Then, by Remark \ref{MHowisME}, $M_E$ is the monoid generated only by the relation, either 
$ n_{u}u = v $ or  $ n_{v}v = u $.  This proves (2).  \\

$(2) \implies (1)$.
Assuming that assumption (2) holds and 
there exists a generating set $\{x_1, x_2\}$ and 
$1 \leq i \neq j \leq 2$ such that  $x_i = nx_j$, for some finite 
$n \geq 2$.
 Utilizing Remark \ref{MHowisME}, we construct the graph
 $(E, w)$  such that $H_{\text{f}}$ and $M_E$ are isomorphic as a monoid. 
 Specifically, we suppose that $d(u, u) = d(v, v) = d(v, u) = 0$. 
In Figure 1, the graph has only one arrow $\alpha_1$, which is weighted with weight $n$. Note that by  \cite[Theorem 5.1]{me}, $\text{add}(x_1 + x_2)$ is equal to $H_f$, and so by \cite[Corollary 4.6]{me}, $\V(\mathcal{L}_K(E,w))$ is $\aleph_0$-isomorphic to $H$. Now we need to show that $\mathcal{L}_K(E,w)$ is regular. Since $d(u, u) = d(v, v) = d(v, u) = 0$, then $(E,w)$ is acyclic, and clearly (LPA1) and (W2) hold. It remains to check conditions (LPA2), (LPA3), and (W1) as explained below:

\begin{enumerate}
    \item [(LPA2)]: Any vertex $ x \in T(r(E^1_w))$ emits at most one edge.
    \smallskip
    
    Since $E^1_w$  contains only one  edge, then 
     $r(E^1_w)$  contains only one element $v$. The same holds for $T(r(E^1_w))$.  Since $d(v, v) = 0 = d(v, u)$, (LPA2) simply holds. 
     %Similarly, if (1) holds, then LPA2 holds.

    \item [(LPA3)]: If two weighted edges $e,f\in E^1_w$ are not in line, then $T(r(e))\cap T(r(f))=\emptyset$.
    \smallskip
    
   Recall that  two edges $e$ and $f$ are {\defit {in line}} if they are equal or there is a path from $r(e)$ to $s(f)$ or 
   from $r(f)$ to $s(e)$. Thus, clearly, (LPA3) holds; indeed, there is only one  weighted edge.
    
    \item [(W1)]: No cycle in $(E,w)$ is based at a vertex $v\in T(r(E^1_w))$.
    \smallskip
    
    This automatically holds since $(E,w)$ is acyclic and there is no cycle in $(E,w)$.
\end{enumerate}

For the last part of the theorem, observe that whenever $x_1 = nx_2$ or $nx_1 = x_2$, then $\add (x_1) = \add (x_2)$. 
 Thus, $H$ is of Type (3). Therefore, using Theorem \ref{typeseriuse} and \cite[Theorem 5.5 (Part 2)]{me}, there exists only one infinite form with respect to any generating set.

\end{proof}

{

In the special case of a graph with weights of 1 (or unweighted), this construction yields the usual Leavitt path algebras (LPAs).
Recall that a vertex $v$ in a graph is called \defit{regular} if  $1 \leq \vert s^{-1} (v) \vert < \infty $ and 
$M_E$ of a graph is the commutaive monoid generated by $\{ a_v \vert v \in E_0\}$ and with relations 
$$ a_v = \sum _{e \in s^{-1} (v)} r (e)$$
for each regular vertex $v \in E$. 
One may wonder if a two-generated $\aleph_0$-monoid 
 $H$ can have a realization to a (weighted) LPA  of a graph with only one vertex and countably many loops:

\begin{remark}
 Let $E$ be the infinite rose graph:

\begin{equation}
\xymatrix{
& \bullet\ar@{<.}@(l,d)  \ar@(ur,dr)^{e_1} \ar@(r,d)^{e_2} \ar@(dr,dl)^{e_3} &
}
\label{E} \tag{E}
 \end{equation}

According to \cite[Theorem 3.1.10]{bookaragene},
 LPA of $E$, denoted by $L_K(E)$, is a purely infinite simple ring, and some properties of $\V(L_K(E))$ are understood in
 \cite[Corollary 2.3.]{Ktheory}. We use \cite[Example 11 and Theorem 14]{GENE} to see that $\V(R)$ is exactly $M_F$, where $F$ is the row finite equivalence of $E$:

\begin{equation}
\xymatrix{
    v_0  \ar@(l,u) ^{g_1}\ar[r]  &  v_1 \ar[r]  \ar@/^1.0 pc/[l]^{f_1}|{\SelectTips{cm}{}\object@{>}}    &
 v_2 \ar[r] \ar[r]  \ar@/^1.5 pc/[ll]^{f_2}|{\SelectTips{cm}{}\object@{>}}    &v_3 \ar[r]  
\ar@/^2.0pc /@{.>}[lll] |{\SelectTips{cm}{}\object@{>}} 
 &    \cdots  & 
 }
\label{F} \tag{F}
 \end{equation}

Consequently, $M_F$ is generated by $\{a_{v_i} \mid i \geq 0\}$ such that $a_{v_i} = a_{v_0} + a_{v_{i + 1}}$ for every $i \geq 0$. In particular, it is not finitely generated.
For a better understanding of $M_F$, note that $a_{v_1} = a_{ v_0} + a_{v_2} = 
(a_{v_0} + a_{v_1}) + a_{ v_2} = a_{v_0} + a_{v_2} + a_{v_1} = a_{v_1} + a_{v_1}$.

If $\Bbb{Z}$ is the ring of integers considered to be a monoid with addition and $z$ is a distinct element from the elements of $\Bbb{Z}$, then $\Bbb{Z} \sqcup {z}$ denotes the monoid with an operation compatible with the operation of $\Bbb{Z}$, where $z$ has the property that $z + x = x + z = x$ for every $x \in \Bbb{Z}$ or $x = z$.

We can see that $M_F$ is the monoid isomorphic to $\Bbb{Z} \sqcup {z}$. To see this, define $f: M_F \to  \Bbb{Z}  \sqcup  {z}$ such that $f(a_{v_i}) = i - 1$. Then $f(n a_{v_0}) = -n$, and by this, it is easy to see that $f$ is a monoid isomorphism. 
\end{remark}
}

To conclude this section and provide a characterization of realizable two-generated monoids to VNR rings, we recall separated graphs:

%The realization of $H$ of Type (1) to weighted LPA is not possible, however this is possible when we work with separated ones. 

%Also, one question is this if we can have a relaization of $H$ to not hereditary weighted LPA. 
%In this case the graph must have one  vertex with infinitely many koops

%Recall that in a monoid $H$, we say $h_1 \geq h_2$ is 

%Let us recall the definition of separated graph from from \cite{ARAGOOD}: 
 
\begin{defi} \label{defsepgraph} {\rm  \cite [Definition 2.1] {ARAGOOD}}
A \defit{separated graph} is a pair $(E,C)$ where $E$ is a graph,  $C=\bigsqcup
_{v\in E^ 0} C_v$, and
$C_v$ is a partition of $s^{-1}(v)$ (into pairwise disjoint nonempty
subsets) for every vertex $v$. (In case $v$ is a sink, we take $C_v$
to be the empty family of subsets of $s^{-1}(v)$.)

If all the sets in $C$ are finite, we shall say that $(E,C)$ is a \emph{finitely separated} graph.

The constructions we introduce revert to existing ones in case $C_v= \{s^{-1}(v)\}$
for each non-sink $v\in E^0$. We refer to a \emph{non-separated graph} or a \emph{trivially separated graph} in that situation.
\end{defi}
 
{

%If all the sets in $C$ are finite, we shall say that $(E,C)$ is a finitely separated graph.
Given a finitely separated graph $(E,C),$
 we define the monoid of the separated graph $(E,C)$
 to be the commutative monoid given by generators and relations as follows:

$$M(E, C) := \langle a_v := \sum_{e \in X} a_{r(e)} \text{ for every } X \in C_v, v \in E^0 \rangle$$

 For the definition of Leavitt path algebra of separated graphs denoted by $L_K (E, C)$, $K$ is a field,  we refer to \cite [Definition 2.2.] {ARAGOOD}. 
 We recall from \cite{ARAGOOD} that  $\V (L_K (E, C)) \cong M(E,C)$. 
 When the graph is adaptable (see Definition \ref{adap}), then $L_K(E, C)$ is hereditary VNR, as demonstrated in \cite{AraBosaPardo}.
 To recall 
 adaptable  ones, note that for a directed graph $E =(E_0,E_1,s,r)$, we have:
\begin{itemize}
\item [(i)] Similar to the case of weighted graphs, we define a pre-order on $E_0$  (the path-way pre-order) by $ v \leq  w$ 
 if and only if there is a directed path $\gamma$ in $E$  with $s(\gamma) = w$ and $r(\gamma) = v$.

\item [(ii)]  Let $ \sim $ be the equivalence relation on the set $E_0$ defined, for 
every $v, w \in  E_0$, by  $v \sim  w$  if  $v \leq  w$  and $ w \leq v.$
 Set $I = E_0/ \sim$ , so that the preorder $\leq $ on $E_0$
 induces a partial order on $I$.
 We will also denote by $\leq$
 this partial order on $I $. Thus, denoting by $[v]$ 
the class of $v\in E_0$,  we have 
$[v] \leq [w]$ if and only if $v \leq  w.$
 We will often refer to $[v]$  as the strongly connected component of $v.$

\item [(iii)] We say that $E$ is strongly connected if every two vertices of $ E_0$
 are connected through a directed path, i.e., if I is a singleton.
\end{itemize}

\begin{defi} { \rm \cite[Definition 1.2] {AraBosaPardo}} \label{adap}
Let $(E, C)$ be
a finitely separated graph and let $(I,\leq )$ be the partially ordered set associated to the pre-ordered set 
$( E_0 , \leq ).$ We say that 
$( E , C )$ is \defit{adaptable} if $I$ is finite, and there exist a 
partition $I = I_{\text {free}} \cup  I_{\text {reg}}$, 
and a family of subgraphs $\{E_p\}_ {p\in I}$ 
 of $E$
 such that the following conditions are satisfied: 
 \begin{enumerate}
 \item \label{here} $E_0 = \bigsqcup _{p \in I} E^0 _p ,$ where $E_p$ is a strongly connected row-finite graph 
 if $p \in  I_{\text{reg} }$ 
and $E^0 _p = \{v^p \}$  is a single vertex if $p \in  I_{\text{free} }$.

\item \label{here1} For $p \in I_{\text {reg}}$ and $w \in  E^0 _p$ , we have
 that  $\vert C_w \vert   = 1$  and $\vert s_{ E_p} ^{- 1} (w) \vert  \geq 2.$ Moreover,
all edges departing from $w$ either belong to the graph $E_p$ or connect $w$ to a vertex
$u \in  E^0_q,$ with $q < p$  in $I .$

\item \label{here2} For $p \in  I_{\text{free} } $, we have that
$ s^{-1} (v^p) = \emptyset$ if and only if $p$ is minimal in $I$.
 If $p$ is not
minimal, then there is a positive integer $k( p)$
 such that  $ C_{v^p} = \{X_1 ^{(p)} , . . . , X_{k( p)} ^{(p)} \}$.
Moreover, each $X_i ^{(p)}$ 
is of the form
$$ X_i ^{(p)} = \{ \alpha ( p, i),  \beta ( p, i, 1), \beta( p, i, 2), \cdots , \beta( p, i, g( p, i))\},$$
for some  $g( p, i) \geq  1,$ where $ \alpha ( p, i)$  is a loop, i.e., 
$s(\alpha ( p, i)) = r (\alpha( p, i)) = v^p ,$ 
 and $r (\beta ( p, i, t)) \in  E ^0_q$  for $q < p$ in $I$.
 Finally, we have  $E^ 1_p = \{\alpha( p, 1), . . . , \alpha( p, k( p))\}$.
 \end{enumerate}

\end{defi}

We understand finitely separated adaptable graphs with two vertices in the following example:

\begin{example}
Consider  the following graph: 

\[
\xymatrix  @=150pt  { 
  u \ar@/^3.5pc/@{}[r]^{\alpha_{d(u, v)}} |{\SelectTips{cm}{}\object@{>}}  \ar@/^2.4pc /@{.>}[r] |{\SelectTips{cm}{}\object@{>}} \ar@/^1.0 pc/[r]^{\alpha_{1}} |{\SelectTips{cm}{}\object@{>}} \ar@{.}@(l,d) \ar@(ur,dr)^{\gamma_{1}} \ar@(r,d)^{\gamma_{2}} \ar@(dr,dl)^{\gamma_{3}} 
\ar@(l,u)^{\gamma_{d(u, u)}}
        &
v   \ar@/^1.0 pc/[l]^{\beta_1}|{\SelectTips{cm}{}\object@{>}}   \ar@/^2.5pc /@{.>}[l] |{\SelectTips{cm}{}\object@{>}}  \ar@/^3.5pc/[l]^{\beta_{d(v, u)}} |{\SelectTips{cm}{}\object@{>}}
\ar@{.}@(l,d) \ar@(ur,dr)^{\tau_{1}} \ar@(r,d)^{\tau_{2}} \ar@(dr,dl)^{\tau_{3}} 
 \ar@(l,u) ^(0.6) {\tau_{d(v, v)}}  }   \\
 %\label{graph} [G]
 \label{graph} \tag{G}
\]

To make $G$ adaptable, we have the following possibilities:
\medskip
\begin{enumerate}
    \item $\vert I \vert = 1$. In this case, both
     $d{(u, v)}$ and $d{(v, u)}$ are nonzero. Also, by Definition \ref{adap}\ref{here}, $I_{\text{free}} = \emptyset$ and $I_{\text{reg}} = \{ [u] = [v] \}$. Then $E_{[u]}^{0} = \{ u, v \}$ and, by
      Definition \ref{adap}\ref{here1}, $\vert C_u \vert = \vert C_v \vert = 1$, $d{(u, u)} + d{(u, v)} \geq 2$ and $d{(v, u)} + d{(v, v)} \geq 2$. Thus $M_E$ is the monoid graph of a non-separated one generated by $a_u, a_v$ and relations
    \[
    \begin{aligned}
        a_{u} &= d{(u, v)} a_v  + d{(u, u)} a_u, \\
        a_{v} &= d{(v, u)} a_u  + d{(v, v)} a_v.
    \end{aligned}
    \]

    \item $\vert I \vert = 2$. In this case, we may assume that $d{(v, u)} = 0$. Then the following cases happen:
    \begin{enumerate}[label=(\roman*)]
\medskip
        \item $d{(u, v)} = 0$. Thus, by  Definition \ref{adap}\ref{here2}, both $u$ and $v$ are regular and consequently, by  Definition \ref{adap}\ref{here1},
         we have $u = d{(u, u)} u$ and $v = d{(v, v)} v$, where
          $d{(u, u)}, d{(v, v)} \geq 2$.
\medskip
        \item $d(u, v) \neq 0$. Then $u > v$ and so $v$ is minimum. Both $E^{0}_u$ and $E^{0}_v$ have only one element. Then the four cases happen:

                 \begin{enumerate}[label=(\roman{enumi} $a_{\arabic*}$)]
\medskip
            \item $u$ and $v$ are both regular. Then by  Definition \ref{adap}\ref{here1},
             our graph is non-separated and $d{(u, u)}, d{(v, v)} \geq 2$. Thus, the relations are:
            \[
            \begin{aligned}
                a_{u} &= d{(u, v)} a_v  + d{(u, u)} a_u, \\
                a_{v} &= d{(v, v)} a_v.
            \end{aligned}
            \]
\medskip
            \item $u$ is regular and $v$ is free. Since $s^{-1}(u)$ is nonempty, and $v$ is minimal, by  Definition \ref{adap}\ref{here2}, $u \in E^{0}_{u}$. 
            Then by  Definition \ref{adap}\ref{here1}, $d{(u, u)} \geq 2$ and 
            $d{(v, v)} = 0$. Consequently, the relation is:
            \[
            \begin{aligned}
                a_{u} &= d{(u, v)} a_v  + d{(u, u)} a_u, \text{ with } d{(u, u)} \geq 2.
            \end{aligned}
            \]
\medskip
            \item Let $u$ be free and $v$ be regular. If $v \in E^{0}_u$, then  Definition \ref{adap}\ref{here2}
             implies that 
            there should be an arrow from $v$ to $u$ which is a contradiction. Then $v \in E^{0}_v$, and so by 
             Definition \ref{adap}\ref{here1}, 
             $d{(v, v)} \geq 2$ and $a_v = d{(v, v)} a_v$. 
            Since $u$ is not minimal, by  Definition \ref{adap}\ref{here2}, there exists a positive integer $k(u)$ such that $d(u, u) = k(u)$. % and $d(u, v) = k(u) (1 + k(u))/2$.
             Also, we have relations $a_u = a_u + i a_v$, for every $1 \leq i \leq k(u)$.
\medskip
            \item Let $u, v$ be free. Since $s^{-1} (u) \neq \emptyset$ and $v$ is minimum in $I$, we conclude by  Definition \ref{adap}\ref{here2}
             that $u \in E^{0}_u$. Then 
            from $E^{0}_v = \{ v\}$ and  Definition \ref{adap}\ref{here2}, 
            we conclude that  
            $d{(v, v)} = 0$ and there exists $k(u) \geq 1$ and relations
            \[
                a_u =  a_u + i a_v, \text{ for every } 1 \leq i  \leq k(u).
            \]
        \end{enumerate}
    \end{enumerate}
\end{enumerate}

\end{example}

%An element $x \in  M$  is regular if $2x \leq  x.$ An element $x \in  M$ is free if $n x \leq mx $ implies $n  \leq m $, for $m, n \in \Bbb{N}.$

The example helps to understand two-generated $\aleph_0$-monoid  with realization  to VNR hereditary rings: } {

\begin{teor} \label{last}
Let $H$ be a non-cyclic  $\aleph_0$-monoid generated by $\{ x_1, x_2\}$. Then the following statements are equivalent:
\begin{enumerate}
\item $H$ has a realization to a hereditary VNR  ring.
\item $ H_{\text{f}}:= \langle x_1, x_2 \rangle  = \add (x_1 + x_2)$  is a refinement monoid and
$H$ is braiding over $H_{\text{f}}$. 
%$x_1 + x_2$ is a braiding element.  
\item 
\begin{enumerate}[label=\textup(\roman*\textup),leftmargin=2.5em]
\item \label{hcc:threeinf} \label{hcc:first} $n x_i + \aleph_0 x_j = \aleph_0 x_i + \aleph_0 x_j$ with $n$ finite implies 
$\aleph_0 x_j = \aleph_0 x_i +\aleph_0 x_j$ and $x_i \in \add (x_j)$
  and if $H$ is of {\rm Type (2)}, then $x_i \leq x_j$.
\item \label{hcc:twoinf} Whenever    $ n \geq  m $ and $n x_i + k x_j = m x_i + k' x_j $, then $(n - m + 1) x_i + k x_j =  x_i + k' x_j$. %If $ x_i  \in \add(x_j) $, then $(n - m ) x_i + k x_j =  k' x_j$. 
\item If $x_i \not \in \add(x_j)$,  $ n \geq  m $ and $m x_i + \aleph_0 x_j = n x_i + \aleph_0 x_j$, then there exist $k , k'$ such that $n x_i + k x_j = m x_i + k' x_j $.
   In this case: 
\begin{itemize}
\item [(a)] If  $H$ is of {\rm Type (2)}, then 
$x_j$ is cancelable  and consequently when 
 $ k > k'$  then $(n - m + 1) x_i + (k - k' ) x_j =  x_i $ and if  $k' >  k$, then $(n - m + 1) x_i  =  x_i + (k' - k ) x_j$. 
\item [(b)] If $H$ is of Type (1), then $ n x_i  =  m x_i$ and $k x_j =  k' x_j $. 
\end{itemize}    
\item \label{hcc:finite-infinite} \label{hcc:last} An element of $H$ cannot have both finite and infinite forms.
\end{enumerate}
\end{enumerate} 
\end{teor}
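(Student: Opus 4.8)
I plan to prove the cycle $(1)\Rightarrow(2)\Rightarrow(3)\Rightarrow(1)$, reducing the ``hereditary'' part of condition~(3) to Theorem~\ref{hereditarycasecor}, the ``refinement'' part to Lemma~\ref{clear}, the recognition of $x_1+x_2$ as a braiding element to Corollary~\ref{useful} and the remark after it, and the passage from finitely generated refinement monoids to hereditary regular rings to \cite{AraBosaPardo}. For $(1)\Rightarrow(2)$: let $f\colon H\to\V^{\aleph_0}(R)$ be an $\aleph_0$-isomorphism with $R$ hereditary and von Neumann regular, and put $[P_i]:=f(x_i)$. Since a hereditary ring is PDFG, $[R]=n[P_1]+m[P_2]$ with $n,m$ finite, so every finitely generated projective lies in $\add(P_1\oplus P_2)$; combined with \cite[Lemma~5.1]{me} this yields $\add([P_1]+[P_2])=\langle[P_1],[P_2]\rangle=\V(R)$, whence $H_{\text{f}}=\add(x_1+x_2)$ and, being $\V(R)$ of a regular ring, a refinement monoid. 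As $R$ is hereditary and $\add(x_1+x_2)$ generates $H$, pulling back the realisation of $\add(x_1+x_2)$ along $f$ and applying \cite[Corollary~4.7]{me} (as in the remark after Corollary~\ref{useful}) shows $x_1+x_2$ is a braiding element, i.e.\ $H$ is braiding over $H_{\text{f}}$.

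For $(2)\Rightarrow(3)$: being braiding over $\add(x_1+x_2)$ forces $H$ to be braided over $\add(x_1+x_2)$, so $H$ is hereditary by Corollary~\ref{useful}, and Theorem~\ref{hereditarycasecor} gives condition~(3)(iv), the first assertion of~(3)(i), and the existence of the finite $k,k'$ in~(3)(iii). Because $H_{\text{f}}$ is refinement, Lemma~\ref{clear} applies: part~(3)(ii) is Lemma~\ref{clear}(1); when $H$ is of Type~(2), the equation witnessing $x_i\in\add(x_j)$ (resp.\ $x_j\in\add(x_i)$), once its remainder is written in a finite form — which it admits, since a hereditary monoid has $\aleph_0 x_i\ne n x_i$ and by~(3)(iv) — takes the shape of Lemma~\ref{clear}(3), giving $x_i\le x_j$ and the cancellability of $x_j$; the two displayed equalities in~(3)(iii)(a) then follow by cancelling $x_j$ in $(n-m+1)x_i+kx_j=x_i+k'x_j$ (Lemma~\ref{clear}(1)); and~(3)(iii)(b) is Lemma~\ref{clear}(2), the cases with a vanishing coefficient being forced by $x_i\notin\add(x_j)$ and $x_j\notin\add(x_i)$. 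Throughout, the hypothesis $n\ge m$ is only used for $n>m$, the case $n=m$ being trivial.

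For $(3)\Rightarrow(1)$: conditions~(3)(i), (3)(iii), (3)(iv) contain conditions~(i)--(iii) of Theorem~\ref{hereditarycasecor}, so $H$ is hereditary. Next, $H_{\text{f}}=\add(x_1+x_2)$: if some $y\in\add(x_1+x_2)$ had a form $\alpha x_1+\beta x_2$ with $\alpha$ infinite, then $\aleph_0 x_1\le n(x_1+x_2)$ for a finite $n$, and adding $\aleph_0 x_1$ to the witnessing equation yields $n(x_1+x_2)=\aleph_0 x_1+n x_2$, so $n(x_1+x_2)$ would have both a finite and an infinite form (using $x_1\ne 0$, as $H$ is non-cyclic), contradicting~(3)(iv); symmetrically for $\beta$. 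Hence $x_1+x_2$ is a braiding element and $H$ is braiding over $H_{\text{f}}$. It remains to prove $H_{\text{f}}$ is a refinement monoid, which I would settle by cases on the type of $H$ (well defined by Theorem~\ref{typeseriuse}): in Type~(1), conditions~(3)(ii) and~(3)(iii)(b) identify $H_{\text{f}}$ with the direct product of the cyclic monoids $\langle x_1\rangle$, $\langle x_2\rangle$, which is refinement; in Types~(2) and~(3), conditions~(3)(ii) and~(3)(iii)(a) together with $x_i\le x_j$ constrain $H_{\text{f}}$ to a submonoid of a ``$\mathbb{Z}\sqcup\{z\}$''-type monoid (as in the rose-graph remark), where the refinement square $a+b=c+d$ is verified directly using separativity and the cancellability of $x_j$. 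Finally, $H_{\text{f}}$ being a finitely generated refinement monoid, \cite{AraBosaPardo} realises it by a (Leavitt path algebra of an adaptable separated graph, hence) hereditary regular ring; after a Morita adjustment so that the order-unit $x_1+x_2$ of $H_{\text{f}}=\add(x_1+x_2)$ corresponds to $[S]$ for the resulting (still hereditary, still regular) ring $S$, \cite[Corollaries~4.6 and~4.7]{me} extend this to $H\cong\V^{\aleph_0}(S)$, a realisation to a hereditary regular ring.

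The main obstacle is the step in $(3)\Rightarrow(1)$ showing that conditions~(3)(ii) and~(3)(iii)(a),(b) actually force $H_{\text{f}}$ to be a refinement monoid: this is where the single genuine (though finite) case analysis lies, matching every identity $a+b=c+d$ in $H_{\text{f}}$ with a refinement square compatible with the very limited non-cancellativity the conditions permit. A secondary technical point is to confirm that the ring obtained from \cite{AraBosaPardo}, after the Morita adjustment and the extension of \cite[Corollaries~4.6--4.7]{me}, is simultaneously hereditary and regular — regularity being stable under the relevant constructions (cf.\ Lemma~\ref{columnfinite} and \cite{AraPardoPerera}) and hereditariness under passing to corners.
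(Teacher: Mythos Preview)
Your proposal follows the same cycle $(1)\Rightarrow(2)\Rightarrow(3)\Rightarrow(1)$ as the paper, with the same key inputs (Theorem~\ref{hereditarycasecor}, Lemma~\ref{clear}, Corollary~\ref{useful}), and your explicit unpacking of the final step via \cite{AraBosaPardo} plus a Morita adjustment is exactly what the paper's citation of \cite[Theorem~5.1]{me} encapsulates. The Type~(1) case of $(3)\Rightarrow(1)$ is also handled the same way in both: the paper likewise reduces $H_{\text{f}}$ to the monoid presented by the relations $x_i=k_ix_i$ alone, which is your ``direct product of cyclic monoids''.

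The one substantive divergence is the Type~(2)/(3) case of $(3)\Rightarrow(1)$. The paper does \emph{not} try to embed $H_{\text{f}}$ in a concrete monoid; your claim that the conditions constrain $H_{\text{f}}$ to a submonoid of a ``$\mathbb{Z}\sqcup\{z\}$''-type monoid is not justified and is not true in general (already in Type~(3) the structure can be richer). Instead, the paper observes two facts: first, that under~(3)(i) (and, in Type~(2), $x_i\le x_j$) the algebraic preorder on $H_{\text{f}}$ is \emph{total}, i.e.\ any two elements of $\langle x_1,x_2\rangle$ are comparable; second, that condition~(3)(ii) makes $H_{\text{f}}$ separative. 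It then invokes \cite[Theorem~3.5]{22} (a separative commutative monoid that is totally preordered is a refinement monoid), which disposes of the refinement check in one stroke. Your proposed direct verification of refinement squares would probably go through with enough casework, but the paper's route via Brookfield's theorem is both shorter and avoids the unjustified structural claim.
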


\begin{proof}
\begin{proofenumerate}
 \item [$(1) \implies (2).$] 
 Suppose that $H$ has a realization to a hereditary VNR ring $R$,
then  $ H_{\text{f}}:= \langle x_1, x_2 \rangle  \cong \V(R)$
and $H$ is braiding over $H_{\text{f}}$, by \cite [Theorem 5.1]{me}.  
Thus $ H_{\text{f}} $ is a refinement monoid.
 \item [$(2) \implies (3).$]   By Corollary \ref{useful}, $H$ is hereditary. 
 Part (iv) follows 
 from 
Theorem \ref{hereditarycasecor}. 
Now if $n x_i + \aleph_0 x_j = \aleph_0 x_i + \aleph_0 x_j$ with $n$ finite, then by 
Theorem \ref{hereditarycasecor}, $x_i \in \add(x_j)$ and if $H$ is of Type (2), using   Part (3) of 
Lemma \ref{clear}, we see that $x_i \leq x_j$.  This shows (i) holds.
Part  (ii) is exactly Part (1) of  Lemma \ref{clear}.  
Now let 
 $x_i \not \in \add(x_j)$,  $ n \geq  m $ and $m x_i + \aleph_0 x_j = n x_i + \aleph_0 x_j$, 
then  by Theorem \ref{hereditarycasecor}, there exist $k , k'$ such that $n x_i + k x_j = m x_i + k' x_j $.
Then if $H$ is of Type (2), then (a) follows from the fact that $x_j$ is  cancelable based on 
 Part (3) of 
Lemma \ref{clear}. Part (b) is Part (2) of Lemma \ref{clear}.

\item [$(3) \implies (1).$] Suppose that (3) holds. Then  by 
Theorem \ref{hereditarycasecor}, $H$ is hereditary and using \cite [Theorem 5.1]{me} to conclude (1),
it is enough to  show that $\operatorname{add}(x_1 + x_2)$ is  refinement. 

We consider two cases:

\begin{enumerate}
\item[T1:] If $H$ is of Type (1) and $n_1 x_1 + m_1 x_2 = n_2 x_1 + m_2 x_2$, then from $n_1 x_1 + \aleph_0 x_2 = n_2 x_1 + \aleph_0 x_2$ and Part (iii)(b) and also Part (ii), 
we have that $x_1 = (\vert n_1 - n_2 \vert + 1) x_1$ and 
 $x_2 = (\vert m_1 - m_2 \vert + 1) x_2$. Let $k_1, k_2$ be the smallest positive integers such that $x_1 = k_1 x_1$ and $x_2 = k_2 x_2$. 
We claim that $\operatorname{add}(x_1 + x_2)$ is a monoid graph of a separated graph with two vertices $u, v$ such that $1 = \vert C_v \vert = \vert C_u \vert$.

To see this, note that $x_i = k_i^m x_i$, for every $1 \leq i \leq 2 $ and $m \geq 1$. If $x_i = n x_i$,  where $n \geq  k_i$, 
then we write $n = m_j k_i^j + \cdots + m_1 k_i + m_0$, for some positive integers $m_j, \cdots, m_0$ and $j \geq 1$. Then $n x_i = (m_j + \cdots + m_0) x_i$.
 We do the same for $n' := m_j + \cdots + m_0$, if $n' \geq k_i$. By repeating this process if it is necessary and bearing in mind that $k_i$ is the minimum one 
 such that $x_i = k_i x_i$, we conclude that $\operatorname{add}(x_1 + x_2)$ is generated by $\{ x_1, x_2\}$ and relations $x_i = k_i x_i$.

\item[T2:] If $H$ is of Type (2), or (3), then for every $a, b \in \langle x_1, x_2\rangle$, we have either $a \leq b $ or  $b \leq a $. Also by condition of (3), we see that $H_{\text{f}}$ 
is separative and consequently by \cite[Theorem 3.5]{22},  it is refinement. 
\end{enumerate}
\end{proofenumerate}
\end{proof}

}

%\clearpage
%\newgeometry{left=2.5cm, right=2.5cm}
\bibliographystyle{hyperalphaabbr}
\bibliography{kappa_monoids}

\end{document}